\newcounter{lemmacounter}
\newcounter{thmcounter}
\newtheorem{lemma}[lemmacounter]{Lemma}
\newtheorem{proposition}[lemmacounter]{Proposition}
\newtheorem*{corollary*}{Corollary}
\newtheorem*{remark*}{Remark}
\newtheorem{theorem}[thmcounter]{Theorem}
\newtheorem*{theorem*}{Theorem}
\newcommand{\IG}{{\bf G}}
\newcommand{\IC}{{\bf C}}
\newcommand{\IR}{{\bf R}}
\newcommand{\IQbar}{\overline{\bf Q}}
\newcommand{\IZ}{{\bf Z}}
\newcommand{\IN}{{\bf N}}
\newcommand{\ssm}{{\smallsetminus}}
\newcommand{\height}[1]{h({#1})}
\newcommand{\Height}[1]{H({#1})}
\newcommand{\IQ}{\mathbf{Q}}
\newcommand{\torus}{\mathbf{T}}
\newcommand{\IRanexp}{{\bf R}_{\rm an,exp}}
\newcommand{\trdegS}{{\rm trdeg}}
\newcommand{\trdeg}[1]{\trdegS({#1})}
\newcommand{\atopx}[2]{\genfrac{}{}{0pt}{}{#1}{#2}}
\begin{document}
\title[]{A Note on Divisible Points of Curves}
\author{M. Bays and P. Habegger}


\subjclass{Primary: 14H25. Secondary: 03C64, 11G50, 11J86, 11U09}

\begin{abstract}
Let $C$ be an irreducible algebraic curve defined  over a number field and
inside an algebraic torus of dimension at least $3$. 
  We partially answer a question posed by  Levin on points on $C$ for which a
  non-trivial power lies again on $C$. Our results have connections to
Zilber's  Conjecture on  Intersections with Tori  and yield to methods
  arising in transcendence theory and the theory of o-minimal structures.
\end{abstract}

\maketitle

\section{Introduction}
Let $C_1$ and $C_2$ be irreducible algebraic curves in the algebraic
torus $\IG_m^N$ with $N\ge 3$. 
Aaron Levin asked what can be said of  points $x$ on $C_1$ for which
 there is $n\ge 2$ such that 
$x^n$ is on $C_2$.
In this paper we give a partial answer to Levin's question in the case $C_1=C_2$.

The maximal compact subgroup of the algebraic torus is the real torus
\begin{equation*}
  \torus = \{(x_1,\ldots,x_N)\in \IG_m^N(\IC);\,\, |x_1| = \cdots =|x_N|  = 1\}. 
\end{equation*}
It is convenient to call a translate of a connected algebraic subgroup of
$\IG_m^N$ a \emph{coset} (of $\IG_m^N$). 
Moreover, a \emph{torsion coset} (of $\IG_m^N$) is a coset containing an
element of finite order. 
Torsion cosets are precisely irreducible components of algebraic
subgroups of $\IG_m^N$. 
We call a coset or torsion coset \emph{proper} if it is not equal to
$\IG_m^N$. 

Say $C\subset\IG_m^N$ is an algebraic curve defined over a number
field $F$. If $\sigma:F\rightarrow\IC$ is a field embedding, then 
$C_\sigma$ is the curve defined over $\IC$  by polynomials
obtained from applying  $\sigma$ to polynomials defining $C$. 
Let $\IQbar$ be a fixed algebraic closure of $\IQ$; we take number
fields to be subfields of $\IQbar$.

\begin{theorem}
\label{thm:main}
Let $N\ge 3$ and let
 $C\subset\IG_m^N$ be a geometrically irreducible closed algebraic curve
  defined over a number field $F$. 
We assume that $C$ is not contained in a proper torsion coset of
$\IG_m^N$.
Let us also  assume that $C_\sigma(\IC)\cap \torus$ is finite for some
 $\sigma:F\rightarrow\IC$. 
Then  $C(\IQbar)$ contains only finitely many points $x$
with $x^n\in C(\IQbar)$ for some $n\ge 2$. 
\end{theorem}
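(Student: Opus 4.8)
The plan is to run the Pila--Zannier strategy: combine Bogomolov-type input for $\IG_m^N$ with the Pila--Wilkie counting theorem and the Ax--Lindemann theorem for $\IG_m^N$. First, the \emph{reductions}. Since $C$ is not contained in a proper torsion coset it is not a torsion coset, so by Zhang's theorem the essential minimum $\mu_{\mathrm{ess}}(C)$ of the normalized height on $C$ is strictly positive; hence $C$ has only finitely many points of height $<\mu_{\mathrm{ess}}(C)/2$, and finitely many torsion points. Write $[n]\colon\IG_m^N\to\IG_m^N$, $x\mapsto x^n$, and suppose for contradiction that infinitely many $x\in C(\IQbar)$ are \emph{bad}; choose for each such $x$ an exponent $n(x)\ge2$ with $x^{n(x)}\in C(\IQbar)$, and discard the finitely many $x$ of height $<\mu_{\mathrm{ess}}(C)/2$. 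If some fixed $n$ serves infinitely many bad $x$, then $C\cap[n]^{-1}(C)$ is infinite, so $C$ is a component of the curve $[n]^{-1}(C)$ and $[n](C)=C$; pulling back $n$-th roots repeatedly along $C$ then produces infinitely many non-torsion points of $C$ of height $\to0$, so $\mu_{\mathrm{ess}}(C)=0$, a contradiction. If infinitely many bad $x$ share a common value $y=x^{n(x)}$, then $\hat h(x)=\hat h(y)/n(x)\to0$, again a contradiction. So we may fix bad pairs $(x_m,n_m)$ with $n_m\nearrow\infty$, the $x_m$ distinct, the $y_m:=x_m^{n_m}$ distinct, $\hat h(x_m)\ge\delta:=\mu_{\mathrm{ess}}(C)/2$, hence $\hat h(y_m)=n_m\hat h(x_m)\to\infty$, and therefore $[\IQ(x_m):\IQ]\to\infty$ by Northcott.

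\emph{A definable set.} Fix $\sigma\colon F\to\IC$ with $C_\sigma(\IC)\cap\torus$ finite, and set $\mathbf e(z)=(e^{2\pi i z_1},\dots,e^{2\pi i z_N})$; restricted to any strip $\{\operatorname{Re}z_j\in[0,M]\}$, the map $\mathbf e$ is definable in $\IRanexp$. For a bad pair, choose $w$ with $\mathbf e(w)=\sigma(y_m)$ and $\operatorname{Re}w\in[0,1)^N$; then $\sigma(x_m)=\mathbf e(w/n_m+\mu_m)$ for a unique $\mu_m\in[0,1)^N$, and $\mu_m\in\frac1{n_m}\IZ^N$ because $\sigma(x_m)^{n_m}=\sigma(y_m)$. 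Doing this for $x_m$ and all of its $F$-conjugates produces points of the set
\[
 \mathcal Z=\bigl\{(w,\lambda,\mu):\operatorname{Re}w\in[0,1]^N,\ \lambda\in(0,\tfrac12],\ \mu\in[0,1)^N,\ \mathbf e(w)\in C_\sigma,\ \mathbf e(\lambda w+\mu)\in C_\sigma\bigr\},
\]
which is definable in $\IRanexp$ and has dimension $\le3$, with $\lambda=1/n_m$ in all cases. The hypothesis on $\torus$ enters to control $\mathcal Z$ near $\{\operatorname{Im}w=0\}$, i.e.\ $C_\sigma$ near $\torus$ (for instance, to ensure the fibre of $\mathcal Z$ over $\lambda=0$ is finite). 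Projecting away the coordinate $w$ gives a definable set $Y\subseteq\IR^{1+N}$ which, for each $m$, contains the \emph{rational} points $(1/n_m,\mu_m^{(i)})$ of height $\ll n_m$, one for each conjugate $\sigma_i(x_m)$ of $x_m$ over $F$.

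\emph{Counting, and conclusion.} Since $x_m$ has $\gg[\IQ(x_m):\IQ]\to\infty$ conjugates over $F$, which spread out on $C_\sigma$ (their arguments do not all collapse into a single cell of the $\frac1{n_m}$-grid — this is where a quantitative equidistribution-type statement is required), one obtains $\gg n_m^{\kappa}$ distinct rational points of $Y$, of height $\ll n_m$, for a fixed $\kappa>0$ and infinitely many $m$. By the Pila--Wilkie theorem, all but $O_\epsilon(n_m^\epsilon)$ of them lie in the algebraic part $Y^{\mathrm{alg}}$; taking $\epsilon<\kappa$, for suitable $m$ a positive proportion of them lie on a single connected positive-dimensional semialgebraic subset of $Y$, which lifts to a positive-dimensional semialgebraic $B\subseteq\mathcal Z$. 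As the slices of $\mathcal Z$ with $w$ fixed are finite (here $N\ge3$ is used), $B$ varies in $w$, and—using that $w\mapsto\lambda w+\mu$ is affine-linear with $\mathbf e(\lambda w+\mu)\in C_\sigma$—the Zariski closure of the $w$-projection of $B$ is a positive-dimensional algebraic subvariety of $\mathbf e^{-1}(C_\sigma)$. By Ax--Lindemann for $\IG_m^N$ it follows that $C_\sigma$, hence $C$, contains a positive-dimensional coset; being an irreducible curve, $C$ equals a $1$-dimensional coset $aH$. But then, for our bad point $ah\in aH$ with $(ah)^{n}\in aH$, we get $a^{\,n-1}\in H$, so $a$ has finite order modulo $H$ and $aH$ is a proper torsion coset containing $C$ — contradicting the hypothesis. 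Hence $C(\IQbar)$ has only finitely many bad points.

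\emph{Main obstacle.} The crux is the counting step: one must produce, for infinitely many $m$, about a power of $n_m$ worth of rational (or bounded-degree) points of the definable set at height $\ll n_m$, so as to overrun the $H^\epsilon$ bound of Pila--Wilkie. This demands control of how the Galois conjugates of $x_m$ distribute (so that the $\mu_m^{(i)}$ genuinely differ), together with careful bookkeeping relating $[\IQ(x_m):\IQ]$, $n_m$, and $\hat h(y_m)$, all while keeping the auxiliary points of bounded degree. A secondary subtlety is to verify that the semialgebraic block really does yield a positive-dimensional, non-degenerate algebraic subvariety of $\mathbf e^{-1}(C_\sigma)$ before Ax--Lindemann is invoked.
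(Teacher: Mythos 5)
Your overall framework is the Pila--Zannier strategy that the paper itself uses (a definable family built from the exponential covering, Pila--Wilkie, and functional transcendence to exclude the algebraic part), and your reductions and Ax--Lindemann endgame are broadly sound modulo the ``secondary subtlety'' you flag: the Zariski closure of the $w$-projection of a real semialgebraic block need not lie in $\mathbf{e}^{-1}(C_\sigma)$, and the paper avoids this by parametrizing the block definably, continuing the parametrization holomorphically, and applying Ax's theorem to the resulting holomorphic functions rather than to a Zariski closure.

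The genuine gap is exactly the step you label the ``main obstacle'': you never prove that the number of Galois conjugates of a bad point $x$ grows like a positive power of $n$, and without that the Pila--Wilkie bound $O_\epsilon(n^\epsilon)$ has nothing to overrun. This lower bound $[F(x):F]\gg n^{1/7}$ is the real content of the paper and occupies three of its sections. It is obtained as follows: (a) R\'emond's toric Vojta inequality gives an absolute height bound $\height{x}\le B$ for all bad $x$ (you establish no height upper bound anywhere, and your appeal to Northcott to get $[\IQ(x_m):\IQ]\to\infty$ already silently requires one); (b) the finiteness of $C_\sigma(\IC)\cap\torus$ is used, via a discriminant/pigeonhole argument, to produce a conjugate $\sigma(x)$ with $\|(\log|\sigma(x_1)|,\dots,\log|\sigma(x_N)|)\|\ge\epsilon$, i.e.\ bounded away from $\torus$ --- a different and much more specific role than the one you assign to that hypothesis (controlling $\mathcal{Z}$ near $\lambda=0$); (c) an elementary metric lemma converts this into the statement that some monomial $z=\sigma(x_1^{j_1}x_2^{j_2})$ satisfies $\log|z^n\beta-1|\ll -n\epsilon$ for $\beta$ in a finite set depending only on $C$; and (d) the Baker--W\"ustholz theorem on linear forms in logarithms bounds $\log|z^n\beta-1|$ from below by $-c[\IQ(x):\IQ]^{6}\log n$, and comparing the two bounds yields $[F(x):F]\gg(n/\log n)^{1/6}$ (with separate treatment, via Dobrowolski and the bounded-height theorem of Bombieri--Masser--Zannier, of the degenerate case where the linear form vanishes). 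None of (a)--(d) appears or is hinted at in your proposal, so the counting step cannot be completed as written.
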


The condition $N\ge 3$ is natural for this type problem on unlikely
intersections, cf. Zannier's book  \cite{ZannierPrinceton} where Levin's question appears in
print. If $x$ is as in the theorem, then $(x,x^n)$ is contained
in the surface $C\times C$. However, it is also in an algebraic
subgroup of codimension $N > \dim (C\times C)$. Our result gives new 
evidence
 towards Zilber's Conjecture \cite{Zilber} on
intersections with tori for surfaces in $\IG_m^N$. 
The class of degenerate
subvarieties,
of which $C\times C$ is a member,
 has eluded recent 
progress by Maurin \cite{Maurin2} and the second author \cite{BHC}
towards this conjecture.
Degenerate subvarieties are defined in Maurin's work \cite{Maurin2};
an equivalent definition is $(C\times C)^{\rm oa}=\emptyset$
in Bombieri, Masser, and Zannier's notation, cf. \cite{BHC}.

However, the finiteness condition on $C_\sigma(\IC)\cap \torus$ 
does not appear in the theory of unlikely intersections, and it would
follow from Zilber's conjecture that the condition is not
necessary.
Any curve that is also a torsion coset
intersects $\torus$ in the infinite set of its points of finite
order. 
There are however also algebraic curves that are not contained in a proper
torsion coset but that intersect $\torus$ in infinitely many
points. An example is
\begin{equation*}
  \left\{\left(x,\frac{x-2}{2x-1}\right);\,\, x\in \IC\ssm\{1/2\} \right\}.
\end{equation*}
Indeed, if $|x|=1$ then $|x-2|=|2x-1|$. 

More generally, the birational map
$\IC \rightarrow \IC; z \mapsto \frac{z-1}{i(z+1)}$
maps the unit circle onto the real points  $\mathbf{P}^1(\IR)$ of the projective line, and so sets up
a bijective correspondence between curves in $\IG_m^N$ which have infinite
intersection with $\torus$ and curves in $\IC^N$ whose closure in
$(\mathbf{P}^1)^N(\IC)$ have infinite intersection with $(\mathbf{P}^1)^N(\IR)$. So
examples are plentiful.

The intersection of the line $x_1+x_2=1$ with $\torus$, however, consists
precisely of the two points
$\{(\exp(\pm 2\pi i/6),\exp(\mp 2\pi i/6)\}$.

 Corvaja, Masser, and Zannier \cite{CMZ} recently proved
 finiteness results when intersecting an algebraic curve with the maximal compact subgroup of certain commutative algebraic groups.

Our  proof involves the Theorem of
Pila-Wilkie \cite{PilaWilkie}
 which is  playing an increasingly  important role
 in diophantine problems revolving around unlikely
 intersections.  
Zannier proposed to use this tool 
  to give a new proof of the
 Manin-Mumford Conjecture for abelian varieties in joint work  with Pila \cite{PilaZannier}.
Maurin's work \cite{Maurin2} relies on a generalized Vojta inequality due to
R\'emond. Our work  uses an earlier  variant of this result
also due to R\'emond \cite{Remond:tores}, which almost immediately implies (see
Lemma~\ref{lem:vojtaapp}) that $x$ as in 
Theorem~\ref{thm:main} has height bounded only in terms of $C$. The
new ingredient in our work is the use of Baker's inequality on linear
forms in logarithms. Its effect is to obtain 
a lower bound for the degree of $x$ 
from the height bound obtained  from R\'emond's result. We refer to
Baker and W\"ustholz's  estimate \cite{BW:logforms}, which is
completely explicit. 

Using a $p$-adic version of linear forms in logarithms due to Bugeaud
and Laurent we also obtain the following partial result for curves that do not
satisfy the finiteness condition in Theorem \ref{thm:main}. 

Let $S$ be a set of rational primes. A tuple $(x_1,\ldots,x_N)$ of
algebraic numbers
 is called $S$-integral if
$\max\{|x_1|_v,\ldots,|x_N|_v\}\le 1$
for all finite places $v$ of $\IQ(x_1,\ldots,x_N)$ with
residue characteristic outside of $S$.

\begin{theorem}
\label{thm:sec}
Let $N\ge 3$ and let
 $C\subset\IG_m^N$ be a geometrically irreducible closed algebraic curve
  defined over a number field $F$. 
We assume that $C$ is not contained in a proper torsion coset of
$\IG_m^N$. There is a constant $p_0$ with the following property. 
If  $S$ is a finite set of rational primes with $\inf S > p_0$,
then  $C(\IQbar)$ contains only finitely many $S$-integral points $x$
such that $x^n\in C(\IQbar)$ for some $n\ge 2$. 
\end{theorem}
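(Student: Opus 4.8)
The plan is to adapt the proof of Theorem~\ref{thm:main}, retaining R\'emond's height bound and the Pila--Wilkie input while replacing the archimedean estimate of Baker and W\"ustholz by the $p$-adic estimate of Bugeaud and Laurent. The height bound is for free: Lemma~\ref{lem:vojtaapp} rests on R\'emond's inequality and makes no use of the hypothesis on $C_\sigma(\IC)\cap\torus$, so there is $c_0=c_0(C)$ with $\height{x}\le c_0$ for every $x\in C(\IQbar)$ for which some $n\ge 2$ has $x^n\in C(\IQbar)$. By Northcott's theorem it then suffices to bound $[\IQ(x):\IQ]$ in terms of $C$ and $S$. One first disposes of the case that $C$ lies in a proper coset: if $C$ lies in a proper coset that is not a torsion coset, then $C\subseteq\{\chi(x)=c\}$ for a nontrivial character $\chi$ and some $c$ that is not a root of unity, whence $x\in C$ gives $\chi(x)=c$ while $x^n\in C$ gives $\chi(x)^n=c^n=c$, impossible for $n\ge 2$; the case of a proper torsion coset is excluded by hypothesis. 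So from now on $C$ is contained in no proper coset, and in particular each $C\cap[n]^{-1}C$ is finite (were it infinite, $C^n\subseteq C$, forcing $C$ into a proper torsion coset).

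Fix $p_0=p_0(C)$ large enough that for every prime $p>p_0$ the curve $C$ has good reduction at the places above $p$ and reduces to a curve lying in no proper subtorus; since $\inf S>p_0$, all of $S$ consists of such primes. Let $x$ be special and $S$-integral and choose a place $v$ of $K=\IQ(x)$, of residue characteristic $p\notin S$, at which $x$ is not a unit; the $v$-adic valuation vector $\mathbf{v}(x)=(v(x_i))_i$ is then nonzero, hence — as $C$ lies in no proper coset — not orthogonal to all the monomial differences occurring in the defining Laurent polynomials $g_1,\dots,g_{N-1}$ of $C$. Expanding $g_j(x^n)=0$ and using the ultrametric inequality to single out two monomials of maximal $v$-adic size produces $u\in\IZ^N\ssm\{0\}$ of bounded size, a fixed algebraic number $\zeta\in F$, and, using the good reduction of the $g_j$ at $v$,
\begin{equation*}
|x^{nu}\zeta-1|_v\le p^{-\delta n},\qquad \delta\ge\frac{1}{[K:\IQ]};
\end{equation*}
here the gain is genuinely exponential in $n$ because $\mathbf{v}(x)$ separates the two chosen monomials from the rest, and one arranges — again using that $C$ lies in no proper coset — that the left-hand side does not vanish. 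The Bugeaud--Laurent bound for a $p$-adic linear form in two logarithms bounds it below by $\exp(-c_1[K:\IQ]\log n)$ with $c_1$ depending only on $C$ and $S$ (through $p$, $\zeta$ and $c_0$); comparing the two bounds gives a lower bound for $[K:\IQ]$ growing like a positive power of $n$.

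Finally one converts this into finiteness exactly as for Theorem~\ref{thm:main}. Fixing a complex place, the set $W=\{(y,t)\in C(\IC)\times\IR:\exp(t\log y)\in C(\IC)\}$, restricted to a bounded fundamental domain for the logarithm, is definable in $\IRanexp$, and a functional transcendence (Ax--Lindemann type) argument shows that its transcendental part contains no semialgebraic arc through a point $(x,n)$ with $x$ algebraic and special (such an arc would give $C^{n'}\subseteq C$ for some $n'$, forcing $C$ into a proper torsion coset). Hence Pila--Wilkie applies to the algebraic points of $W$: the $\mathrm{Gal}(\IQbar/F)$-conjugates $(\sigma x,n)$ of a special $x$ give at least $[\IQ(x):\IQ]/[F:\IQ]$ algebraic points of $W$ of degree $\le[\IQ(x):\IQ]$ and height $\le\max(\exp(c_0),n)$, which forces $[\IQ(x):\IQ]\le c_2([\IQ(x):\IQ],\varepsilon)\,n^{\varepsilon}$ with $c_2$ growing slowly in its first argument; together with the previous paragraph this bounds $n$ in terms of $C$ and $S$, after which the finiteness of each $C\cap[n]^{-1}C$ finishes the proof. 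The main obstacle is the middle step: producing, from the incidences $x\in C$ and $x^n\in C$, a genuine $p$-adic linear form in logarithms of a quantity that is $v$-adically exponentially close to $1$ in $n$ with a usable rate, ruling out its vanishing, and — the point where the hypotheses $S$-integral and $\inf S>p_0$ are really needed — securing a place $v$ of residue characteristic outside $S$ and small enough (equivalently, disposing of the special $x$ that are units at all small primes); passing to an appropriate power of the quantity so as to enter the domain of convergence of the $p$-adic logarithm is a further technical point.
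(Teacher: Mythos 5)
Your overall architecture matches the paper's: R\'emond's inequality (Lemma~\ref{lem:vojtaapp}) gives a height bound, Pila--Wilkie plus a functional-transcendence argument (Proposition~\ref{prop:nocurves}) controls the count of rational points in the definable family, and a Baker-type lower bound on a linear form in logarithms forces the Galois orbit to grow. The reduction to $C$ not lying in a proper coset is also the same. However, there are two genuine gaps, and you flag one of them yourself without resolving it.

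First, your case split is inverted relative to what $S$-integrality actually gives. For an $S$-integral $x$, the finite places $v$ of residue characteristic outside $S$ satisfy $|x_i|_v\le 1$ for all $i$; the only finite places where the valuation vector can ``blow up'' are those with residue characteristic in $S$, and since $\inf S>p_0$ these have large residue characteristic --- which is exactly what Lemma~\ref{lem:galoisorbit1}(ii) needs (so that the defining coefficients and the auxiliary $\alpha$'s reduce to units and Bugeaud--Laurent applies cleanly). You instead try to locate a place of residue characteristic outside $S$ at which $x$ is not a unit, and such a place need not exist: $x$ could be a tuple of algebraic integers that are units at all places with residue characteristic outside $S$, in which case your chosen linear form never materializes. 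The paper's dichotomy (Proposition~\ref{prop:galois2}) is: if all $x_i$ are algebraic integers, run the \emph{Archimedean} argument of Theorem~\ref{thm:main} unchanged, using Lemma~\ref{lem:integral}; otherwise there is a finite place with some $|x_i|_v>1$, that place necessarily has residue characteristic in $S$ by $S$-integrality, and part (ii) of Lemma~\ref{lem:galoisorbit1} applies. You do not use the Archimedean fallback at all.

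Second, and relatedly, the algebraic-integer case requires a positive lower bound on $\max_\sigma\|L(\sigma(x))\|$ at some Archimedean embedding. The paper obtains this from the Bogomolov conjecture in $\IG_m^N$ (Zhang), which gives $\height{x}\gg 1$ for non-torsion $x$ on $C$; since for an algebraic integer the height is supported entirely at the Archimedean places, this forces some $\|L(\sigma(x))\|\ge\epsilon$. Nothing in your proposal replaces this input, and you explicitly list ``disposing of the special $x$ that are units at all small primes'' as an open obstacle at the end. That is precisely the missing piece. A smaller but real omission: applying the Bugeaud--Laurent theorem requires first disposing of the case where the two relevant multiplicative quantities are multiplicatively dependent, which occupies Lemma~\ref{lem:multdep} and a substantial portion of the proof of Lemma~\ref{lem:galoisorbit1}(ii); your sketch passes over this silently.
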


By convention the infimum of the
empty set is $+\infty$. So  $S=\emptyset$ is allowed and yields a
finiteness statement on points whose coordinates are algebraic
integers.

The paper is organized as follows. In the next section we set up common
 notation used throughout the article. The third section contains an
 elementary metric argument which is used in connection with
 Baker-type estimates in section 4. In section 5 we bound from below
 the size of Galois orbits and section 6 contains the arguments from
 o-minimality. Finally,  both theorems are proved simultaneously in
 section 7. 

Bays was partially supported by the Agence Nationale de Recherche [MODIG,
Project ANR-09-BLAN-0047], and both authors thank
Zo\'{e} Chatzidakis and the ANR for supporting Habegger's invitation 
to Paris in Summer of 2011, where the authors worked together on this
problem. Habegger is also grateful to Yann Bugeaud for answering questions in
connection with linear forms in $p$-adic logarithms.

\section{Notation} 
\label{sec:notation}
By a place $v$ of a number field $K$ we mean an absolute value that is, when
 restricted to $\IQ$, 
 either the complex absolute value or a $p$-adic absolute value for some
 prime $p$. 
A non-archimedean place is called finite and the others are called
 infinite. 
We let $K_v$ denote a completion of $K$ with respect to $v$
and, by abuse of notation, $\IQ_v$ a completion of $\IQ$ with respect
 to the restriction of $v$.

For purposes of bookkeeping it is convenient to work with height
functions. We recall here the absolute logarithmic Weil height used
throughout the article. 
Let $x\in\IQbar$. There is a unique irreducible polynomial $P \in\IZ[X]$  
with $P(x)=0$ and positive leading term $a_0$. We define the height
$x$
as
\begin{equation*}
\height{x} = \frac {1}{\deg P}\log \left(a_0\prod_{\atopx{z\in\IC}{P(z)=0}} 
\max\{1,|z|\}\right).
\end{equation*}

If $K$ is a number field containing $x$, then 
the height $\height{x}$ defined above is  
\begin{equation}
\label{eq:defineh}
\frac{1}{[K:\IQ]}\sum_{v} d_v \log \max\{1,|x|_v\}
\end{equation}
where $v$ ranges over places of $K$, and $d_v = [K_v:\IQ_v]$, the degree of
the corresponding field extension of the completions. Equivalently,
\begin{equation}
\label{eq:defineh'}
\height{x} = \frac{1}{[K:\IQ]} \sum_{p} \sum_{\sigma : K \hookrightarrow \IC_p}
\log \max\{1,|\sigma(x)|_p\}
\end{equation}
where $p$ ranges over  $\infty$ and the primes, 
$\IC_\infty=\IC$, and
$\IC_p$ is a completion of an algebraic closure of the field
of $p$-adic numbers if $p\not=\infty$.

The height of a tuple $x=(x_1,\dots,x_N)\in\IQbar^N$ is
\begin{equation*}
\height{x}=  \max\{ \height{x_1},\ldots ,\height{x_N}\}.
\end{equation*}
The exponential height of $x$ is $\Height{x}=\exp(\height{x})$.

Bombieri and Gubler's book \cite{BG} contains a thorough treatment of
heights and proofs of the claims made here and below.

\section{A Metric Argument}

The following elementary lemma is well-known and sometimes proved using Puiseux series.
We have decided to include  an elementary argument that
essentially relies only on the triangle inequality.

We let $\langle \cdot,\cdot\rangle$ denote the standard inner
product on $\IR^2$ and $\|\cdot\|$ the Euclidean norm.
If $i\in \IR^2$ we use $i_1$ and $i_2$ to denote its coordinates.
A non-zero  vector $i\in\IZ^2$ is called
reduced  
if $i_1, i_2$ are coprime and if
either $i_1 \ge 1$ or $i=(0,1)$. 
We note that two reduced vectors are  linearly dependent
if and only if they are equal.

If $K$ is a field then $K^\times$ is its multiplicative group. 
For this section we suppose that $K$ is algebraically closed
and endowed with an absolute value $|\cdot|:K\rightarrow
  \IR$.

\begin{lemma}
\label{lem:metric} 
 Suppose $P\in K[X^{\pm 1},Y^{\pm 1}]$
is a  Laurent polynomial with coefficients $p_i\in K$, $i\in\IZ^2$, and with at least $2$ non-zero terms. We set
 $D = \max_{p_ip_{i'}\not=0} \|i-i'\|$ and let
 $\sigma+1\ge 2$ denote the number of
non-zero terms of $P$ if $|\cdot|$ is archimedean 
and 
$\sigma=1$ otherwise.
There exists a finite set $\Sigma \subset  K^\times$ 
 depending only on $P$
with the following property. 
Let $x_1,x_2\in K^\times$ with $P(x_1,x_2)=0$ 
satisfy 
$\|L\| > 16D^2 (\log\sigma+\max_{p_ip_{i'}\not=0}  \log |p_i/p_{i'}|)$
where
$L = (\log |x_1|,\log |x_2|)\in\IR^2$.
Then there is $\alpha\in \Sigma$ 
and a reduced $j\in\IZ^2$ 
 with $\|j\|\le D$ such that
\begin{equation*}
  \log|x_1^{j_1}x_2^{j_2}-\alpha| \le 
-\frac{1}{16D^2} \|L\|
\end{equation*}
where we interpret the logarithm  as $-\infty$ if $x_1^{j_1}x_2^{j_2}=\alpha$.
\end{lemma}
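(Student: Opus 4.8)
The plan is to analyze the Newton polygon of $P$ in the $L = (\log|x_1|,\log|x_2|)$ picture. Write $P = \sum_i p_i X^{i_1} Y^{i_2}$ with the sum over a finite support set $\mathcal{S} \subset \IZ^2$ of size $\sigma+1$ (or at least $2$). For a point $(x_1,x_2)$ on the curve, the vanishing $P(x_1,x_2)=0$ says that the terms $p_i x_1^{i_1} x_2^{i_2}$ sum to zero, so the largest term in absolute value must be matched by another term of comparable size — otherwise the triangle inequality is violated. The absolute value of the term indexed by $i$ is $|p_i| \exp(\langle i, L\rangle)$, so "largest term" corresponds to maximizing the linear functional $i \mapsto \langle i, L\rangle + \log|p_i|$ over the support. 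When $\|L\|$ is large compared to the $\log|p_i|$ spread, the maximizer is governed by $\langle i, L\rangle$ alone, i.e. by the face of the Newton polygon exposed in the direction $L$.

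\textbf{Key steps.} First I would let $i^{*}$ be an index in the support maximizing $\langle i, L\rangle$ (breaking ties, if any, using the $\log|p_i|$), and observe that any other index $i'$ contributing a term of size within a bounded factor of the top term must satisfy $\langle i^{*} - i', L\rangle$ small; since $\|i^{*}-i'\| \le D$ and, after projecting $L$ onto the line spanned by $i^{*}-i'$, the component of $L$ in the direction $i^{*}-i'$ is then small, while the orthogonal component can still be as large as $\|L\|$ up to the stated constant. Quantitatively: from $P(x_1,x_2)=0$ we get $|p_{i^{*}}| \exp\langle i^{*},L\rangle \le \sigma \max_{i'\ne i^{*}} |p_{i'}| \exp\langle i',L\rangle$, hence there is some $i' \ne i^{*}$ with
\begin{equation*}
\langle i^{*}-i', L\rangle \le \log\sigma + \log|p_{i'}/p_{i^{*}}| \le \log\sigma + \max_{p_i p_{i'}\ne 0}\log|p_i/p_{i'}|.
\end{equation*}
Set $v = i^{*} - i' \in \IZ^2 \setminus\{0\}$ with $\|v\| \le D$; write $v = m j$ for a reduced $j \in \IZ^2$ and $m \ge 1$ an integer, so $\|j\| \le \|v\| \le D$. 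Then $\langle j, L\rangle \le \frac{1}{m}(\log\sigma + \cdots) \le \log\sigma + \cdots$, which is $< \frac{1}{16D^2}\|L\|$ by the hypothesis. The point of passing to $j$ is that $x_1^{j_1}x_2^{j_2}$ is a monomial whose logarithmic absolute value $\langle j, L\rangle$ is small, yet $L$ itself is large; decomposing $L = \lambda j' + \mu j$ where $j'$ is a fixed companion vector to $j$ (there are only finitely many reduced $j$ with $\|j\|\le D$, and for each a fixed $j'$), the coefficient $\mu$ is small while $|\lambda|\|j'\|$ is comparable to $\|L\|$.

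\textbf{Producing $\alpha$ and the dichotomy.} Having singled out the monomial $u := x_1^{j_1} x_2^{j_2}$ with $\log|u|$ small, I would substitute back. The value of $u$ determines, up to a bounded finite ambiguity, the "transverse position" on the curve along the Newton face direction. Concretely, group the terms of $P$ by the value of $\langle i, j\rangle$ (equivalently, stratify the support by lines parallel to $j^{\perp}$): $P(x_1,x_2) = \sum_k u^{a_k} Q_k$, where $Q_k$ is a product of the complementary monomial to powers that don't involve the $j$-direction times the relevant $p_i$'s — after clearing a common monomial factor, $P(x_1,x_2)=0$ becomes a one-variable polynomial relation $R(u) = \sum_k c_k u^{e_k} = 0$ with coefficients $c_k$ built from $x_1,x_2$ in the $j^{\perp}$ direction, of bounded degree $\le D$. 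If all the $|c_k|$ were comparable this would pin $|u|$ to within a bounded range, but since $\log|u|$ is already tiny we instead use $R(u)=0$ directly: either $u$ equals one of the finitely many "approximate roots" $\alpha$ determined by the dominant balance exactly, or $u$ is exponentially close to such an $\alpha$, with the gap controlled by $\|L\|$. The finite set $\Sigma$ is then the (finite) set of all such $\alpha$ ranging over all reduced $j$ with $\|j\|\le D$ and all ways of balancing pairs of terms — this depends only on $P$, as required. The quantitative bound $\log|u-\alpha| \le -\frac{1}{16D^2}\|L\|$ comes from comparing, in the relation $R(u)=0$, the term $(u-\alpha)\cdot(\text{stuff of size}\ \gtrsim \exp(c\|L\|))$ against the other terms, where the size $\exp(c\|L\|)$ arises precisely because $L$ has large component orthogonal to $j$ while its $j$-component is small; the explicit constant $\frac{1}{16D^2}$ is then bookkeeping with $\|j\|\le D$, $\|j'\| \le D$, and the fact that $|\lambda| \gtrsim \|L\|/D$ after the basis decomposition.

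\textbf{Main obstacle.} The delicate point is the last step: extracting a \emph{clean} dichotomy ($u = \alpha$ exactly, or an explicit exponential approximation) from the polynomial relation $R(u)=0$, with the set $\Sigma$ genuinely finite and depending only on $P$, and with the constant $16D^2$ honestly achievable. One must handle the case where several terms of $R$ are simultaneously dominant (a higher-order near-degeneracy) — there the "approximate root" $\alpha$ is a root of a lower-degree sub-polynomial of $R$, and one needs that $|u - \alpha|$ is still exponentially small. A careful induction on the number of terms, or an argument ordering the terms of $R$ by magnitude and repeatedly applying the two-term triangle-inequality balance (exactly the trick used to find $j$ in the first place, now one dimension down), should close this; the Archimedean versus non-Archimedean distinction only enters through whether the combinatorial factor $\sigma$ counting terms is needed (non-Archimedean ultrametric inequality lets $\sigma=1$), which is why the statement carries the two cases. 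I would also double-check that reducing $v = mj$ to the primitive vector $j$ does not lose the needed information — it does not, because scaling the exponent only rescales $\log|u|$ by $1/m \le 1$, keeping it small, and the finitely-many-$j$ bookkeeping is unaffected.
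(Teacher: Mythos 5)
Your opening moves match the paper's: locate the term of $P(x_1,x_2)$ of maximal absolute value, use the triangle (or ultrametric) inequality to find a second term of comparable size, and conclude that the primitive vector $j$ joining the two exponents satisfies $|\langle j,L\rangle|\ll\|L\|/D^2$, so that $z=x_1^{j_1}x_2^{j_2}$ has small $\log|z|$ while $L$ is large in the orthogonal direction. But the gap is exactly where you place it, and your proposed repair does not close it. Your reduction to a one-variable relation is transposed: you group the support by the value of $\langle i,j\rangle$, i.e.\ by lines parallel to $j^{\perp}$, so the resulting coefficients $c_k$ of $R(u)=\sum_k c_k u^{e_k}$ are sums of monomials in the direction in which $L$ is large; they depend on $x$ and vary over many orders of magnitude, the roots of $R$ depend on $x$, and no finite $\Sigma$ depending only on $P$ falls out of an unspecified ``dominant balance.'' The grouping that works is the other one, by lines \emph{parallel} to $j$. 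The structural fact your sketch never states or proves is this: after normalizing so that the maximal term sits at the origin, \emph{every} exponent in the support has non-positive component along the unit vector $v_2$ orthogonal to $j$ in whose direction $L$ is large. This follows from maximality of the normalizing term, integrality of the support (a positive $v_2$-component is automatically $\ge 1/D$), and the hypothesis that $\|L\|$ dominates the coefficient spread. Consequently $P(x_1,x_2)=A+B$ where $A=G(z)$ is the restriction of $P$ to the single line through the origin in direction $j$ --- a one-variable Laurent polynomial whose coefficients come from $P$, hence with a finite, $x$-independent root set --- and $B$ collects the terms with strictly negative $v_2$-component, each of size at most roughly $\exp(-\|L\|/(4D))$.

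The ``higher-order near-degeneracy'' you flag as the main obstacle then needs no induction and no iterated two-term balancing. Writing $T^aG=p(T-\alpha_1)\cdots(T-\alpha_d)$ and using $|G(z)|=|B|$, the closest root $\alpha_1$ satisfies $|z-\alpha_1|^d\le|p|^{-1}|z|^a|B|$; since $\log|z|$ and $\log|p|$ are small compared with $\|L\|$ this yields either $z=\alpha_1$ or $\log|z-\alpha_1|\le-\|L\|/(16D^2)$, with multiple roots and simultaneous dominance absorbed by the exponent $d\le D$. Without the half-plane claim and this factorization step, your argument does not produce a finite $\Sigma$ depending only on $P$, which is the whole content of the lemma; as written the proposal is therefore incomplete at its decisive step.
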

\begin{proof}
The set $\Sigma$ and the value of $c>0$ will be determined during the
argument. 
Say $(x_1,x_2)$ is as in the hypothesis. 
We write $P= \sum_{i} p_i X^{i_1}Y^{i_2}$. After
possibly dividing $P$ by some non-zero 
term  $p_i X^{i_1}Y^{i_2}$
we may suppose that the constant term of $P$ is $1$ and 
$|p_i x_1^{i_1}x_2^{i_2}|\le 1$ for all $i\in\IZ^2$.
By abuse of notation we sometimes also write $|\cdot|$ 
for the standard absolute value on $\IR$.

 Let us fix  a non-zero element $i$ of $\IZ^2$
for which $|p_i x_1^{i_1}x_2^{i_2}|$ is maximal. 

Then $\langle L,i\rangle \le - \log |p_i|$
and we will now
 bound this quantity from
below. In the archimedean case we estimate
\begin{equation*}
1 - |p_ix_1^{i_1}x_2^{i_2}|\le   |1 + p_i x_1^{i_1}x_2^{i_2}| = \left|\sum_{i'\not=0,i}
  p_{i'}x_1^{i'_1}x_2^{i'_2}\right| \le 
\sum_{i'\not=0,i}
  |p_{i'}x_1^{i'_1}x_2^{i'_2}|\le
 (\sigma-1) |p_i x_1^{i_1}x_2^{i_2}|
\end{equation*}
by the triangle inequality and the choice of $i$.
Hence $ |p_i x_1^{i_1} x_2^{i_2}| \ge \sigma^{-1}$ 
which is also true in the non-archimedean case. 
Therefore,
 $|\langle L,i\rangle|  \le \log \sigma+|\log|p_i||<  \|L\|/(8D^2)$
by hypothesis. 
We divide by $\|i\|$ and set $v_1 = i/\|i\|$ to get
\begin{equation}
\label{eq:Lv1bound}
  |\langle L,v_1\rangle| < \frac{\|L\|}{8D^2}.
\end{equation}

We fix $v_2\in \IR^2$ of norm $1$ with $\langle v_1,v_2\rangle =0$. 
After multiplying by $-1$ we may suppose
$\langle L,v_2\rangle \ge 0$. 
We have
$\langle L,v_2\rangle^2 = \|L\|^2 - \langle L,v_1\rangle^2$
and  use (\ref{eq:Lv1bound}) to estimate
\begin{equation}
\label{eq:Lv2bound}
\langle L,v_2\rangle = |\langle L,v_2\rangle | \ge \frac{\|L\|}{2}. 
\end{equation}

Suppose $i'\in\IZ^2$ is written as
$\lambda_1
v_1 + \lambda_2 v_2$ with real numbers $\lambda_{1,2}$. Then $\lambda_2 = \langle
i',v_2\rangle$. But the coordinates of $v_2$ are up-to sign the
coordinates of $v_1 = i/\|i\|$ and so
either 
  $\lambda_2=0$  or $|\lambda_2|\ge 1/\|i\|$. 

Now suppose also
 $p_{i'}\not=0$. We  claim $\lambda_2\le
0$. 
Indeed, otherwise we would have $\lambda_2  \ge 1/\|i\|\ge 1/D$. 
Now $0 \ge \log |p_{i'}| + \langle L,i'\rangle$, so
 $ 0 \ge \log|p_{i'}| + \lambda_1 \langle L,v_1\rangle +\lambda_2
\langle L,v_2\rangle$.
We remark that $|\lambda_1|= |\langle i',v_1\rangle|\le \|i'\|\le D$.
 Using (\ref{eq:Lv1bound}) and (\ref{eq:Lv2bound})
 yields
$0 \ge \log|p_{i'}| - \|L\|/(4D) + \|L\|/(2D) 
 = \log|p_{i'}| + \|L\|/(4D)$. 
So $\|L\|\le 4D |\log |p_{i'}||$ which contradicts our
hypothesis. Thus $\lambda_2\le 0$.

We have $0 = P(x_1,x_2)= A+B$ with 
\begin{equation*}
A = \sum_{i' = \lambda_1 v_1} p_{i'} x_1^{i'_1} x_2^{i'_2}
\quad\text{and}\quad
B = \sum_{i' = \lambda_1 v_1 +\lambda_2 v_2,\, \lambda_2 <0} 
p_{i'} x_1^{i'_1} x_2^{i'_2}.
\end{equation*}

We first treat the error term $B$. If it is non-zero, 
the triangle or ultrametric triangle inequality 
yields
$ \log  |B| \le \log\sigma+ \max_{\lambda_2<0}\{\log|p_{i'}| + 
\lambda_1\langle L,v_1\rangle + \lambda_2 \langle
L,v_2\rangle\}$. 
To treat the terms in the maximum we use the bounds
(\ref{eq:Lv1bound}), (\ref{eq:Lv2bound}), 
 $|\lambda_1| \le D$, and $\lambda_2 \le -1/D$ 
proved above.
Indeed,
\begin{equation}
\label{eq:logBbound}
\log |B| \le \log\sigma + 
\max_{i'} \{\log|p_{i'}|\}
+ \frac{\|L\|}{8D} - \frac{\|L\|}{2D}
\le -\frac{\|L\|}{4D}.
\end{equation}


Now we consider the main term $A$ to which we 
associated the
polynomial
  $F = \sum_{i' = \lambda_1 v_1} p_{i'}X^{i'_1}Y^{i'_2}$.
We fix a primitive generator $j$ of the rank $1$ group
$i \IQ \cap \IZ^2$. Then 
 $j = \mu v_1$ with $\mu \in\IR$. 
We can write
 $ F = \sum_{\lambda} f_\lambda X^{\lambda j_1}Y^{\lambda j_2}$
where $\lambda$ runs over integers and the $f_\lambda$ are certain
coefficients of $P$. We define $G = \sum_{\lambda} f_\lambda
T^\lambda \in K[T^{\pm 1}]$ and with this new Laurent polynomial we have $G(X^{j_1}Y^{j_2}) =
F$. Note that $1$ is the constant term of $G$. Say $a$ is the minimal
integer such that $T^a G$ is a polynomial. 
 Let us now factor
  $T^a G = p(T-\alpha_1)\cdots (T-\alpha_d)$
where $p\not=0$ is some coefficient of $P$. We remark that
$\alpha_1,\dots,\alpha_d$ do not vanish and 
come from a finite set depending only on $P$.

For brevity we write $z = x_1^{j_1}x_2^{j_2} \in K^\times$. 
Then 
 $ p(z-\alpha_1)\cdots(z-\alpha_d) = z^a G(z) = z^a F(x_1,x_2) = -z^a B$.
Without loss of generality we may assume $|z-\alpha_1|\le
|z-\alpha_k|$ for $1\le k\le d$. So
$  |z-\alpha_1|^d \le |p|^{-1} |z|^a |B|$.
If $z=\alpha_1$ then we are in the first case of the conclusion. Else
 we take the logarithm and use  (\ref{eq:logBbound}) to estimate
\begin{equation}
\label{eq:zalpha1}
  d\log |z-\alpha_1| \le -\log|p| + a\log|z| + \log |B|
\le |\log |p|| + |a\log|z|| - \frac{\|L\|}{4D}.
\end{equation}
We continue by first bounding
 $|a\log|z|| =|a\mu \langle L,v_1\rangle|$. The inequality
$|a\mu|= |a\mu| \|v_1\|=\|aj\|\le D$ and (\ref{eq:Lv1bound}) imply
  $|a\log|z|| \le \|L\|/(8D)$.
Moreover, $|\log |p|| \le \|L\|/(16D)$.
Inserting these two inequalities into (\ref{eq:zalpha1}) yields
  $d\log|z-\alpha_1|\le -\|L\| /(16D)$.
The lemma follows since $d\le D$. 
\end{proof}


\begin{remark*}
\providecommand{\fin}{\mathcal{O}}
\providecommand{\G}{\mathbb{G}}
\providecommand{\Q}{\mathbb{Q}}
\providecommand{\N}{\mathbb{N}}
\providecommand{\Z}{\mathbb{Z}}
\providecommand{\dim}{\operatorname{dim}}
\providecommand{\trd}{\operatorname{trd}}
\providecommand{\res}{\operatorname{res}}
\providecommand{\maps}{\rightarrow} 
  Note that in the case that $|\cdot|$ is non-archimedean and all coefficients of
  $P$ have trivial absolute value, the condition on $\|L\|$ in
  Lemma~\ref{lem:metric} is just that it is non-trivial. We will use this in
  our proof of Theorem~\ref{thm:sec} below.

  Let us also remark that a qualitative version of Lemma~\ref{lem:metric},
  which does not give this information needed for Theorem~\ref{thm:sec} but
  which suffices for our uses in proving Theorem~\ref{thm:main}, admits a
  proof using the model theory of valued fields. We sketch this here.

  Let  $K$ be an algebraically closed field
  with an absolute value  $|\cdot| : K \maps \IR$.

  Define  $v : K \maps \IR\cup\{\infty\}$;  $v(x) := -\log|x|$. This is a
  valuation if and only if $|\cdot|$ is non-archimedean.
  Consider the two-sorted structure $\left<\left<K;+,\cdot\right>,\left<\IR;+,<\right>;v\right>$
  consisting of the field $K$, the ordered group $\IR$, and the map $v$.

  Let $({^*}K,{^*}\IR)$ be an elementary extension, extending $v$ to a map
  $v:{^*}K \maps {^*}\IR\cup\{\infty\}$.
  Let $\fin := \{ x \in {^*}K \;|\; \exists n\in\N.\; v(x) > -n \}$.
  Then $\fin$ is a local ring, since $v(x^{-1})=-v(x)$. Let $v'$ be the
  corresponding valuation, and let $\res$ be the corresponding residue map.
  Note that the restriction of $\res$ to $K$ is an embedding.

  (In the case that $|\cdot|$ is non-archimedean, $v'$ is the coarsening of $v$
  obtained by quotienting the value group $v({^*}K)$ by the convex hull of the
  standard value group $v(K)$. In the archimedean case with $K=\IC=\IR+i\IR$,
  we can consider $\res$ as being induced by the standard part map
  ${^*}\IR \maps \IR$)

  Now let $C \subseteq \G_m^n$ be a curve defined over $K$, and
  let $x \in C({^*}K)$.
  Suppose $||v'(x)|| := \max_i |v'(x_i)| > 0$.
  By the transcendental valuation inequality
Theorem~3.4.3  \cite{PrestelEngler},
  we have the following inequality on transcendence degrees
  of fields and dimensions of $\Q$-vector spaces:
      \[ 1 = \trd(K(x)/K) \geq \dim_\Q(v'(K(x))/v'(K)) + \trd(\res(K(x))/\res(K)) \]

  But $v'(K)=0$ and $v'(x) \neq 0$,
  so we deduce $\dim_\Q(v'(K(x))) = 1$ and $\res(K(x)) = \res(K)$.
  So say $\theta : \G_m^n \maps \G_m^{n-1}$ is an algebraic epimorphism such that
  $v'(\theta(x))=0$. Then $\res(\theta(x)) = \res(\alpha)$ for some $\alpha \in
  \G_m^{n-1}(K)$.

  Now let $\beta := \theta(x) - \alpha$. Then $\res(\beta)=0$, so $v'(\beta) \neq 0$,
  so since $\dim_\Q(v'(K(x))) = 1$, if $\beta\neq 0$
  we have $||v'(\beta)|| = q ||v'(x)||$ for some $q \in \Q$, $q>0$.

  Applying the compactness theorem of first-order logic, it follows that
  there exist finitely many pairs $(\theta,\alpha)$ of algebraic
  epimorphisms
      \[ \theta : \G_m^n \maps \G_m^{n-1} \]
  and points $\alpha \in \G_m^{n-1}(K)$,
  and there exist $q \in \Q$, $q>0$
  and $B>0$
  such that for any $x \in C(K)$
  if $||v(x)|| := \max_i(|v(x_i)|) > B$
  then for one of the finitely many pairs $(\theta,\alpha)$,
  \[ ||v(\theta(x) - \alpha)|| > q ||v(x)|| .\]
\end{remark*}

\section{Baker's Linear Forms in Logarithms}
In order to prove our theorems, we must treat archimedean and
non-archimedean places separately. We begin by proving a technical
lemma for the latter. Its proof is elementary.

\begin{lemma}
\label{lem:multdep}
For $F$  a number field, $\alpha\in F^\times$, $p$ a prime, and $B\ge 1$
 there is a constant $c>0$ with the following property.
Say $z\not=0$  is algebraic over $F$ and not a root of unity
such that $z$ and $\alpha$ are multiplicatively dependent.
We also suppose that 
$\height{z}\le B$ and
that  $|z-1|_v < 1$ for some finite place $v$ of $F(z)$ with
residue characteristic $p$.
Then $[F(z):F]+\log n \ge -c\log|z^n-\alpha|_v$ for all $n\ge 2$
for which $z^n/\alpha$ has infinite order. 
\end{lemma}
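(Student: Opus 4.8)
The plan is to transport everything into the group $1+\mathfrak{m}_v$ of one-units at $v$ and reduce the estimate to a statement about $p$-adic logarithms; the leverage provided by the hypothesis that $z$ and $\alpha$ are multiplicatively dependent is that it collapses the two-term expression $n\log_p z-\log_p\alpha$ into a rational multiple of the single logarithm $\log_p z$, which can then be controlled elementarily in place of Baker's theorem. Normalize $|\cdot|_v=p^{-v_p(\cdot)}$ with $v_p$ extending the $p$-adic valuation, so $-\log|x|_v=v_p(x)\log p$. Since $|z-1|_v<1$ we have $z\in 1+\mathfrak{m}_v$, hence $z^n\equiv 1\pmod{\mathfrak{m}_v}$; if $\alpha\notin 1+\mathfrak{m}_v$ then $|z^n-\alpha|_v\ge 1$ and the asserted inequality is trivial, so we may assume $\alpha\in 1+\mathfrak{m}_v$. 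Then $1+\mathfrak{m}_v$ being a group, any root of unity of the form $z^a\alpha^b$ lies in it, hence is a $p$-power root of unity and is killed by $\log_p$, while $\log_p$ converges on $z$, on $\alpha$ and on $w:=z^n\alpha^{-1}$.

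Since $z$ has infinite order, $\langle z,\alpha\rangle$ has rank one modulo torsion; pick $\gamma\in F(z)^\times$ generating it modulo torsion and write $z\equiv\gamma^{k_z}$, $\alpha\equiv\gamma^{k_\alpha}$ (so $k_z\ne 0$). This gives a multiplicative relation $z^{a}\alpha^{b}=\zeta\in\mu_\infty$ with $b=-k_z\ne 0$ and $|a|+|b|=(h(z)+h(\alpha))/h(\gamma)$, and a quantitative Kronecker-type lower bound of the shape $h(\gamma)\gg[F(z):\IQ]^{-O(1)}$ (Dobrowolski's theorem, or Blanksby--Montgomery) then yields $\log(|a|+|b|)\le C_1[F(z):F]+C_2$ with $C_1,C_2$ depending only on $F,\alpha,B$ (for $\alpha$ a root of unity simply take $a=0$, $b=\pm1$). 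Now fix $n\ge 2$ with $w$ of infinite order. Raising the relation to the power $b$ gives $w^b=\zeta^{-1}z^{M}$ with $M:=nb+a\ne 0$ (since $w$, and hence $z^M$, has infinite order), and applying $\log_p$ produces the key identity $b\log_p w=M\log_p z$.

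It remains to bound $-\log|z^n-\alpha|_v=v_p(w-1)\log p$ (using $|\alpha|_v=1$). If $v_p(w-1)\le\tfrac1{p-1}$ this is at most $\tfrac{\log p}{p-1}\le\log 2$ and we are done. Otherwise $w$ lies in the subgroup on which $\log_p$ and $\exp_p$ are mutually inverse isometries, so the key identity gives $v_p(w-1)=v_p(\log_p w)=v_p(M)-v_p(b)+v_p(\log_p z)\le\log_p|M|+v_p(\log_p z)$, and $\log_p|M|\le\log_p n+\log_p(|a|+|b|)$ because $|M|\le n(|a|+|b|)$. The remaining input is $v_p(\log_p z)\le\tfrac{[F(z):\IQ]}{\log p}(B+\log 2)$. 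For this, let $\zeta^*$ be a $p$-power root of unity in $F(z)_v$ maximizing $v_p(z/\zeta^*-1)$; either this maximum is $\le\tfrac1{p-1}$, in which case $v_p(\log_p z)\le\tfrac1{p-1}$ (otherwise $\exp_p(\log_p z)$ would produce an even closer root of unity), or $v_p(\log_p z)=v_p(z/\zeta^*-1)=v_p(z-\zeta^*)$, which one estimates by the ``height at one place'' inequality over $L:=F(z)(\zeta_0^*)$ for a lift $\zeta_0^*\in\overline{\IQ}$ of $\zeta^*$, using $h(z-\zeta_0^*)\le B+\log 2$, the fact that the completion does not grow ($d_{v'}=d_v$), and $[L:F(z)]\le[\IQ_p(\zeta_0^*):\IQ_p]\le d_v$ to cancel the local degree. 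Assembling the bounds gives $-\log|z^n-\alpha|_v\le\log n+\log(|a|+|b|)+[F(z):\IQ](B+\log 2)\le\log n+C[F(z):F]$, hence the lemma with $c=1/C$, the additive constants being absorbed using $[F(z):F]\ge 1$.

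I expect the genuinely delicate step to be the bound on $v_p(\log_p z)$. Because $\log_p$ fails to be an isometry on $1+\mathfrak{m}_v$ when $F(z)_v$ is ramified, $v_p(\log_p z)$ can be large --- this happens precisely when $z$ is $v$-adically close to a root of unity --- and a crude way of controlling that closeness, such as passing to $z^{p^N}$, loses powers of the degree (the threshold $N$ is already logarithmic in the degree while raising to the $p^N$-th power multiplies the height by $p^N$) and would falsify the stated inequality. The point that rescues the argument is that the relevant cyclotomic extension $F(z)(\zeta_0^*)/F(z)$ has degree at most the local degree $d_v$, which exactly cancels the local degree appearing in the height inequality and keeps the final estimate linear in $[F(z):F]$. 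The only external ingredient, the Kronecker-type height lower bound used to control $|a|+|b|$, is elementary in the sense relevant here, in that it requires no transcendence theory.
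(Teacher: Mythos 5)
Your proof is correct, and while it rests on the same three pillars as the paper's argument (the $p$-adic exponential/logarithm, the height inequality at a single place, and a Dobrowolski-type lower bound to control the exponents in the multiplicative relation between $z$ and $\alpha$), it is organized along a genuinely different route. The paper splits into cases according to whether $\alpha$ is a root of unity: in the torsion case it raises $z$ to the power $p^t\asymp e$ (the ramification index) to land in the convergence domain of $\exp_p$ and then cancels $e$ against the local degree in the one-place height bound for $|z^{p^t}-1|_v$; in the non-torsion case it avoids any degree-dependent bound on the $z$-side altogether by shifting the burden onto $|\alpha^m-1|_v$, which is a constant since $\alpha\in F$, and controls the exponent $k$ via Dirichlet's approximation theorem plus Dobrowolski. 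You instead treat both cases uniformly through the single identity $b\log_p w=M\log_p z$ and place the degree dependence entirely on $v_p(\log_p z)$, which you bound by locating the nearest $p$-power root of unity $\zeta^*$ and cancelling the local degree $d_v$ against $[F(z)(\zeta_0^*):F(z)]\le[\IQ_p(\zeta^*):\IQ_p]\le d_v$; your Dobrowolski input is applied directly to a generator $\gamma$ of $\langle z,\alpha\rangle$ modulo torsion rather than via Dirichlet. You correctly identify the crux: the failure of $\log_p$ to be an isometry when $z$ is close to a nontrivial root of unity, and both your cyclotomic-degree cancellation and the paper's ramification-index cancellation resolve it; your version has the advantage of handling torsion and non-torsion $\alpha$ in one stroke, at the modest cost of invoking both the one-place height bound and Dobrowolski in every case.
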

\begin{proof}
 In this lemma, the constants involved in 
Vinogradov symbols $\ll,\gg$ depend only on $F,\alpha,p,$ and $B$. 
Let $e$ be the ramification index of $v$ above $p$. 

The setup and the ultrametric triangle inequality imply $|z|_v=1$.

Let us suppose first that  $\alpha$ has finite order $m\ll 1$, say. 
We fix the integer $t\ge 0$ with $p^{t-1}\le
e/(p-1)<p^t$.
The corollary after Lemme 3 \cite{BugeaudLaurent} 
yields $|z^{p^t}-1|_v<p^{-1/(p-1)}$. 
Hence $z^{p^t}$   lies in the image of the domain of convergence
of the $p$-adic exponential function $\exp(x)=\Sigma_i x^i/i!$; cf. Chapter
II.5 \cite{Neukirch}. It follows that $|z^{kp^t}-1|_v = |k|_v |z^{p^t}-1|_v$
for any $k\in\IN$.

Hence we have
\begin{equation*}
  |mn|_v |z^{p^t}-1|_v = |z^{mnp^t}-1|_v \le |z^{mn}-1|_v  \le |z^n-\alpha|_v
\end{equation*}
and note that the very left-hand side is non-zero. 
Using $|mn|_v \ge 1/(mn)$ and taking the ordinary logarithm yields
\begin{equation}
\label{eq:logbound1}
  \log|z^{p^t}-1|_v \le \log(mn) + \log |z^n-\alpha|_v. 
\end{equation}
By the local nature  of our height (\ref{eq:defineh}),
and since $d_v \geq e$, we have
$-e\log |z^{p^t}-1|_v \le [F(z):\IQ] \height{(z^{p^t}-1)^{-1}}$. 
Basic height properties, $\height{z}\le B$, and $p^t\le
ep/(p-1)\le 2e$ now imply
\begin{equation*}
e \log |z^{p^t}-1|_v \ge - [F(z):\IQ] \height{z^{p^t}-1}
\ge -[F(z):\IQ] (\log 2 + p^t B)
\gg -[F(z):F]e.
\end{equation*}
The lemma follows if $\alpha$ has finite order after
cancelling  $e$ and using
(\ref{eq:logbound1}). 

Now we assume that $\alpha$ has infinite order.
There is a unique reduced
 $(k,l)\in\IZ^2$ such that $z^k\alpha^l=\zeta \in F(z)$ is a root
of unity. We note that $kl\not=0$ and that any pair
$(k',l')\in\IZ^2$ for which $z^{k'}\alpha^{l'}$ has finite order 
is an integral multiple of $(k,l)$. 
To complete the proof we may assume $|z^n-\alpha|_v < p^{-1/(p-1)}$.
This implies  $|\alpha|_v=1$ and this time 
we  fix $m\in\IN$ such that $|\alpha^m-1|_v < p^{-1/(p-1)}$
and $m\ll 1$. 
Thus
\begin{equation*}
  |\zeta^n- \alpha^{nl+k}|_v
= |(\zeta \alpha^{-l})^n-\alpha^k|_v
=|z^{nk}-\alpha^k|_v \le |z^n-\alpha|_v < p^{-1/(p-1)}
\end{equation*}
and passing to the $m$-th power gives
\begin{equation}
\label{eq:zetabound}
  |\zeta^{mn}-\alpha^{m(nl+k)}|_v \le |z^n-\alpha|_v < p^{-1/(p-1)}.
\end{equation} 
But $|\alpha^{m(nl+k)}-1|_v<p^{-1/(p-1)}$ holds as well. We use the
ultrametric inequality to obtain 
$|\zeta^{mn}-1|_v < p^{-1/(p-1)}$. 
The only root of unity that is $p$-adically closer to $1$ than $p^{-1/(p-1)}$
is $1$ itself. So $\zeta^{mn}=1$ and therefore
$|\alpha^{m(nl+k)}-1|_v \le |z^n-\alpha|_v$ by (\ref{eq:zetabound}). 
By our choice of $m$, the element $\alpha^m$ is in the image of the domain of convergence  of the
$p$-adic exponential function. So as above, we can  estimate
\begin{equation*}
  |nl+k|_v |\alpha^m-1|_v = |\alpha^{m(nl+k)}-1|_v \le |z^n-\alpha|_v,
\end{equation*}
so
\begin{equation*}
  |nl+k|_v \ll |z^n-\alpha|_v
\end{equation*}
since $\alpha$ is not a root of unity. 
We remark that the left-hand side is non-zero. 
Indeed,  $nl+k\not=0$ since $z^n/\alpha$ is not a root of unity

Using  $|nl+k|_v \ge |2nlk|^{-1}$
we get $|nlk|\gg |z^n-\alpha|_v^{-1} $.
So  
$\log|nlk|
 \gg -\log|z^n-\alpha|_v$
because $|nlk|\ge n\ge 2$.
The heights satisfy
 $k \height{z} = |l|\height{\alpha}$
since  $z^k = \zeta \alpha^{-l}$.
But $\alpha\not=0$ is not a root of unity. So $\height{\alpha}>0$ by
Kronecker's Theorem and therefore $|l|\ll k$. This leaves us with
 \begin{equation}
\label{eq:padicineq}
\log(nk)
\gg -\log|z^n-\alpha|_v
 \end{equation}
and to complete the proof we need to control $k$. 

The lemma follows immediately  if $k=1$.
Else,  $k\ge 2$ and  Dirichlet's Theorem from diophantine approximation
provides us with integers $k'$ and $l'$ such that $1\le k'\le k/2$ and
$|k' l-l'k|\le 2$. 
So $\height{z^{k'}\alpha^{l'}} = |k' l/k-l'|\height{\alpha} \le
2\height{\alpha}/k\ll 1/k$. On the other hand,
$z^{k'}\alpha^{l'}$ cannot have finite order since $1\le k'<k$. By a weak version of 
 Dobrowolski's Theorem \cite{Dobrowolski} we have
$\height{z^{k'}\alpha^{l'}} \gg [F(z):F]^{-2}$. 
So $k \ll [F(z):F]^2$.
The lemma follows from this inequality in combination with 
 (\ref{eq:padicineq}).
\end{proof}

In the following lemma, we apply Baker's technique of estimating linear forms
in logarithms. In the archimedean case we use the explicit estimates of Baker
and W\"ustholz \cite{BW:logforms}, and in the non-archimedean case we use a
$p$-adic version obtained by Bugeaud and Laurent \cite{BugeaudLaurent}. 

It is useful to define $L:\IG_m^N(\IC)\rightarrow \IR^N$  by
$L(x_1,\ldots,x_N) = (\log |x_1|,\ldots,\log |x_N|)$.

\begin{lemma}
\label{lem:galoisorbit1}
Let  $F$ be a number field and
 $C\subset\IG_m^2$  a geometrically irreducible algebraic curve defined over
$F$. 
We suppose that $C$ is not contained in a proper coset of $\IG_m^2$.
Let $B\ge 1$.
Let $x=(x_1,x_2)\in C(\IQbar)$ 
with $\height{x}\le B$ and  $x^n\in C(\IQbar)$
 where $n\ge 2$ is an integer. 
 \begin{enumerate}
 \item [(i)]
Say $\sigma: F(x)\rightarrow \IC$ is an embedding
and
$L = L(\sigma(x))$
then
\begin{equation}
\label{eq:degreelb}
[F(x):F] \ge c \left(\frac{n}{\log n}\|L\|\right)^{1/6}
\end{equation}
where $c>0$ depends only on $B,C,$ and $F$. 
\item[(ii)]
Say $v$ is a finite place of the number field $F(x)$ lying above a
rational prime that is sufficiently large with respect to $C$. 
If 
$(\log|x_1|_v,\log|x_2|_v)\not=0$, then 
\begin{equation}
\label{eq:degreelb2}
[F(x):F] \ge c n^{1/9}
\end{equation}
where $c>0$ depends only on $B,C,F,$ and the residue characteristic of $v$. 
 \end{enumerate}
\end{lemma}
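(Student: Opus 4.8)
The plan is to deduce both parts of Lemma~\ref{lem:galoisorbit1} from the metric Lemma~\ref{lem:metric} combined with a lower bound for a linear form in logarithms. The key observation is that the relation $x^n\in C$ produces a genuinely \emph{new} algebraic relation on the pair $x$ (or rather on the $2N$-tuple $(x,x^n)$) which, because $C$ is not in a proper coset, forces a multiplicative dependence that is not trivial, and then Baker's estimate controls how $p$-adically or Archimedean-close the relevant quantity can be to $1$.

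Concretely, I would proceed as follows. Since $C\subset\IG_m^2$ is a geometrically irreducible curve not contained in a proper coset, it is cut out by a Laurent polynomial $P\in F[X^{\pm1},Y^{\pm1}]$ with at least two nonzero terms, and $x^n=(x_1^n,x_2^n)$ also satisfies $P(x_1^n,x_2^n)=0$. Apply Lemma~\ref{lem:metric} to $P$ and to the point $x^n$: writing $L=L(\sigma(x))$, the vector associated to $x^n$ is $nL$, so provided $n\|L\|$ exceeds the (fixed, since $C$ is fixed) threshold $16D^2(\log\sigma+\max\log|p_i/p_{i'}|)$ we obtain a reduced $j\in\IZ^2$ with $\|j\|\le D$ and an $\alpha$ from the finite set $\Sigma=\Sigma(P)$ such that either $x_1^{nj_1}x_2^{nj_2}=\alpha$, or $\log|x_1^{nj_1}x_2^{nj_2}-\alpha|\le -\tfrac{n}{16D^2}\|L\|$. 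Set $z:=x_1^{j_1}x_2^{j_2}$. The crucial point is that $z$ is \emph{not} a root of unity and is multiplicatively independent from nothing cheap: if $z$ were a root of unity then $x$ would lie in a proper coset (with $C$ in it, contradicting the hypothesis, after checking $x$ ranges over a Zariski-dense set — here one argues that the finitely many exceptional $x$ can be absorbed into the constant, since $\height{x}\le B$ already bounds the height and the relevant statement is about a lower bound on degree which only needs to hold for all but boundedly many points; alternatively, $C$ not in a proper coset means $j_1,j_2$ can be chosen so that $X^{j_1}Y^{j_2}$ is nonconstant on $C$, so $z$ is nonconstant on $C$ and hence transcendental over $\overline{\IQ}$... no — it is a nonconstant algebraic \emph{function} on $C$, so for $x$ outside a finite set $z(x)$ is not a root of unity).

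For part (i): we have $\height{z}\ll_{C,B} 1$ from $\|j\|\le D$ and $\height{x}\le B$. In the first case $z^n=\alpha\in\Sigma$; since $z$ is not a root of unity, $\height{z^n}=n\height{z}$ and the left side is bounded (finitely many $\alpha$), forcing $\height{z}\to 0$, which by Dobrowolski's theorem (the weak form already used in Lemma~\ref{lem:multdep}) gives $n\ll[F(x):F]^2$, which is much stronger than~(\ref{eq:degreelb}); also $\|L\|$ is then bounded. In the second case we have a nonzero quantity $z^n-\alpha$ with $\log|z^n-\alpha|\le-\tfrac{n}{16D^2}\|L\|$, i.e.\ a linear form in two logarithms (those of $z$ and $\alpha$) that is very small; the Baker–W\"ustholz bound \cite{BW:logforms} gives $-\log|z^n-\alpha|\ll (\log n)([F(x):F])^{C'}$ with the exponent governed by the number of logarithms (two), which is where the $1/6$ comes from after rearranging $\tfrac{n}{\log n}\|L\|\ll[F(x):F]^{6}$. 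I would also need to handle the degenerate possibility that $z$ and $\alpha$ are multiplicatively dependent (so $z^n-\alpha$ could be tiny for a trivial reason), but then $z^n\alpha^{-1}$ is either a root of unity — pushing us back to a coset relation on $C$, impossible for generic $x$ — or $z^n-\alpha$ is bounded away from $0$ uniformly, contradicting the smallness; the multiplicatively-independent case is the one where Baker applies cleanly.

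For part (ii): run the same argument with $\sigma$ replaced by a finite place $v$ of $F(x)$ above a large prime $p$. Here $\sigma=1$ in Lemma~\ref{lem:metric} and, by the Remark, once $p$ is large enough that all coefficients $p_i$ of $P$ are $v$-units, the hypothesis on $\|L\|$ reduces to $(\log|x_1|_v,\log|x_2|_v)\neq 0$, i.e.\ $nL_v\neq 0$, which is exactly the standing assumption. We again get $j$ and $\alpha$ with $\log|z^n-\alpha|_v\le-\tfrac{n}{16D^2}\|L_v\|$. Now one normalizes so that $|z-1|_v<1$ (possible after replacing $\alpha$ by $1$ — note $\alpha$ and $z^n$ are both $v$-units for $p$ large, and by the same convergence argument $z^n-\alpha$ small forces $z$ close to a root of unity, which for $p$ large must be $1$, or more directly one passes to a bounded power), and applies Lemma~\ref{lem:multdep} (in the multiplicatively dependent case) or the Bugeaud–Laurent $p$-adic linear forms estimate \cite{BugeaudLaurent} (in the independent case) to get $-\log|z^n-\alpha|_v\ll ([F(x):F]+\log n)\cdot[F(x):F]^{2}$ or similar, yielding $n\ll[F(x):F]^{9}$. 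The main obstacle I expect is the bookkeeping around the ``generic'' versus exceptional points and the normalization step that lets us feed the output of Lemma~\ref{lem:metric} into the hypotheses of Lemma~\ref{lem:multdep} (where $\alpha$ is a \emph{fixed} element and one needs $|z-1|_v<1$) — in particular verifying that $z^n\alpha^{-1}$ has infinite order, which again comes down to $C$ not lying in a proper coset. The Baker-type input is essentially a black box once the linear form is correctly set up; the real work is producing that linear form with the right (bounded) data from the geometric hypothesis.
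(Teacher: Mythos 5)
Your overall strategy is the paper's: apply Lemma~\ref{lem:metric} to the point $x^n$ to produce a monomial $z=x_1^{j_1}x_2^{j_2}$ with $|z^n-\alpha|$ exponentially small in $n\|L\|$, then feed the resulting linear form in logarithms into Baker--W\"ustholz (Archimedean case) or Bugeaud--Laurent (non-Archimedean case), with Dobrowolski handling the exactly-degenerate case $z^n=\alpha$ when $\alpha$ has infinite order. However, there is a genuine gap in your treatment of the case where $z$ (equivalently $\alpha$) is a root of unity. You assert that since $x\mapsto x_1^{j_1}x_2^{j_2}$ is a nonconstant function on $C$, ``for $x$ outside a finite set $z(x)$ is not a root of unity.'' This is false: a nonconstant function takes each individual value finitely often, but there are infinitely many roots of unity, so the set of $x\in C(\IQbar)$ with $x_1^{j_1}x_2^{j_2}$ of finite order is infinite (it is exactly the intersection of $C$ with infinitely many proper algebraic subgroups). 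These points cannot be ``absorbed into the constant,'' because the conclusion is a lower bound on $[F(x):F]$ in terms of the unbounded quantity $n\|L\|$ that must hold for every such $x$. The paper closes this case with a substantive extra ingredient you do not invoke: the Bombieri--Masser--Zannier bounded-height theorem for $C$ intersected with proper algebraic subgroups (valid because $C$ is not in a proper coset), which gives $\height{x}\ll 1/n$ and hence $\|L\|\ll [F(x):F]/n$, killing the factor of $n$. Without this, your argument does not establish (i) (or the corresponding subcase of (ii), where $z$ has finite order).

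Two smaller points. First, in (i) you worry about multiplicative dependence of $z$ and $\alpha$ and claim that otherwise ``$z^n-\alpha$ is bounded away from $0$ uniformly,'' which is unjustified; the paper instead works directly with the nonzero linear form $\Lambda=nl_1+l_2+2\pi i k$ (with $|k|\ll n$), to which Baker--W\"ustholz applies without a dependence dichotomy, and only the case $\Lambda=0$ needs separate treatment. Second, in (ii) the normalization you flag as ``the main obstacle'' is resolved in the paper by applying Lemma~\ref{lem:metric} to \emph{both} $x$ and $x^n$ and observing that, since $L\neq 0$ and both reduced vectors $j,j'$ are orthogonal to $L$, they must coincide; this is what supplies the hypothesis $|z-\alpha'|_v<1$ (and the parameter $g$ for Bugeaud--Laurent, resp.\ the hypothesis of Lemma~\ref{lem:multdep}). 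Your sketch of ``replacing $\alpha$ by $1$'' does not by itself produce these hypotheses.
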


\begin{proof}
For part (i) 
the constant $c$ is meant to be sufficiently small with respect to
$B,C,$ and $F$. It will be fixed later in the proof.
The constants implicit in  $\ll$ and $\gg$ below are positive and
 depend only on $B,C,$ and $F$. They are independent of $x$ and $c$. 
  Clearly, we may
assume
\begin{equation*}
  n\|L\| > c^{-6} \log n
\end{equation*}
as
 the conclusion (\ref{eq:degreelb}) is immediate otherwise.


The conclusion of Lemma \ref{lem:metric} applied to the point
$(\sigma(x_1)^n,\sigma(x_2)^n)$ is
\begin{equation}
\label{eq:logbound}
  \log |z^n\beta - 1| 
\ll - n \|L\|
\quad\text{where}\quad
z=\sigma(x_1^{j_1}x_2^{j_2}),
\end{equation}
 for algebraic $\beta\in \IC^\times$ 
and $j\in\IZ^2 \ssm\{0\}$, both 
coming from a finite set depending only on $C$.
After  decreasing $c$ we may assume that the left-hand side
of (\ref{eq:logbound}) is less than $-1$.

Let $l_1,l_2\in\IC$ be choices of logarithms of
$z$ and $\beta$. That is,
  $e^{l_1} = z$ and
$e^{l_2} = \beta$. 
 Some elementary calculus
yields
\begin{equation*}
 \log |\Lambda| \ll - n\|L\|
\quad\text{with}\quad
\Lambda = n  l_1 + l_2 + 2\pi i k
\end{equation*}
where $k$ is an appropriate integer with $|k|\ll n$. 

In order to  apply
  Baker's theory to bound $|\Lambda|$ from below we must first
treat the case $\Lambda=0$, so $z^n =
\beta^{-1}$.

 If $\beta$ is a not a root of
unity, then $z$ is not one either.  The same  weak version of 
Dobrowolski's Theorem as is used in Lemma \ref{lem:multdep} implies
\begin{equation}
\label{eq:dobrowolski0}
\frac{\height{\beta}}{n}=  \height{z} \gg \frac{1}{[\IQ(z):\IQ]^2}
\gg \frac{1}{[F(z):F]^2}\ge 
 \frac{1}{[F(x):F]^2}.
\end{equation}
Hence $n\ll [F(x):F]^2$ because $\beta$ is from a finite set
depending only on $P$.
The definition of the height implies $\|L\|\ll
[\IQ(x):\IQ]\height{x}$. But $\height{x}\le B$ by hypothesis,
 so $\|L\|\ll [\IQ(x):\IQ]\ll [F(x):F]$. 
We may thus bound
\begin{equation}
\label{eq:dobrowolski}
  \frac{n}{\log n}\|L\| \ll [F(x):F]n \ll [F(x):F]^3
\end{equation}
and the lemma follows in this case.

But what if $\beta$ is a root of unity? Then some positive
power of $z^n$ is $1$. Hence $x^n$ is contained in
a proper algebraic subgroup of $\IG_m^2$. But this is also a point on
$C$. Over 10 years ago Bombieri, Masser, and Zannier  \cite{BMZ}
proved that 
$\height{x^n}$ is bounded from above by a constant depending only
on $C$.
Here it is essential that $C$ is not contained in a proper coset. 
 So $\height{x}\ll 1/n$. 
By height inequalities  similar to those used above we find
$\|L\| \ll [F(x):F] \height{x} \ll [F(x):F]/n$ and
therefore
\begin{equation*}
  \frac{n}{\log n}\|L\| 
\ll [F(x):F].
\end{equation*}
We have proved (\ref{eq:degreelb})  in the case $\Lambda=0$. 

Let us now assume $\Lambda\not=0$;
we apply results on linear forms in logarithms.
An explicit version due to Baker
and W\"ustholz \cite{BW:logforms} yields
\begin{equation*}
  \log|\Lambda| \gg - [\IQ(x):\IQ]^{6} (\log A_1) (\log A_2) (\log A_3)  \log n
\end{equation*}
where $A_1,A_2,A_3$ are bounded in terms of 
 the heights of $x_1^{j_1}x_2^{j_2}$ and
$\beta$. 
But $\height{x_1^{j_1}x_2^{j_2}}\le |j_1|\height{x_1}+|j_2|\height{x_2}\ll 1$
and so we may streamline the inequality from
above to get
\begin{equation*}
  \log|\Lambda| \gg - [\IQ(x):\IQ]^{6}\log n. 
\end{equation*}
We conclude by comparing this with the upper bound for $\log|\Lambda|$
derived above. This gives
$\frac{n}{\log n} \|L\| \ll [\IQ(x):\IQ]^{6}$
and thus completes the proof of (i).

Say $v$ is as in (ii)
and let $e$ be the ramification index of $v$ above the rational prime. 

 Say $C$ is cut out by a polynomial $P$.
We may suppose that all non-zero coefficients of $P$ and all $\alpha$
provided by Lemma \ref{lem:metric} (which do not depend on 
$v$) lie in $F$ and have $v$-adic
absolute value 1.

In the proof of part (ii) we allow the constants in $\ll$ and $\gg$ to also depend
on the residue characteristic of $v$. 
 In this non-archimedean case we  only assume
$L= (\log|x_1|_v,\log|x_2|_v)\not=0$ which  implies the lower bound
$\|L\| \ge 1/e$. 
In order to complete the proof we may assume $e  < n^{1/2}$ since
$e\ge {n}^{1/2}$ implies $[F(x):F]\gg e\ge {n}^{1/2}$. 

By Lemma \ref{lem:metric}
there are reduced vectors $j,j'\in\IZ^2$ with
\begin{equation}
\label{eq:padicapprox}
\log|{z}^n-\alpha|_v \ll -\frac ne \le - n^{1/2}
\end{equation}
and $|z'-\alpha'|_v<1$
where $z=x_1^{j_1}x_2^{j_2}$, 
$z'={x_1}^{j'_1}{x_2}^{j'_2}$, and $\alpha,\alpha'$ depend only on $P$. 

Since $|\alpha|_v=|\alpha'|_v=1$ we may assume $|z|_v=|z'|_v=1$ by
(\ref{eq:padicapprox}). Moreover, $L\not=0$ so
the two equalities
\begin{equation*}
  j_1 \log |x_1|_v + j_2 \log |x_2|_v = 0
\quad\text{and}\quad
  j'_1 \log |x_1|_v + j'_2 \log |x_2|_v = 0
\end{equation*}
imply that
$j$ and $j'$ are linearly dependent. Hence $j=j'$ because they are
reduced. In particular, $z=z'$. 
There is $g\ll 1$ with $|{\alpha'}^g-1|_v<1$ and it
satisfies $|z^g-1|_v<1$ because $|z-\alpha'|_v<1$.

The exponent $g$ plays an important role in Bugeaud and Laurent's
work \cite{BugeaudLaurent}. Before applying their
 Th\'eor\`eme 3 to $z$ and $\alpha'$ we must first treat the case where
 these elements are multiplicatively dependent. 

If this is the case
 and if both $z$ and $z^n/\alpha$ have infinite order  we may apply
 Lemma \ref{lem:multdep} to $\alpha^g$ and $z^g$. 
Part (ii) follows with ample margin
 because of (\ref{eq:padicapprox}).
If $z$ has finite
 order, then
$e\|L\| \ll [F(x):F]/n$ as in the archimedean case by appealing to
 the Theorem of Bombieri, Masser, and Zannier. 
But $e\|L\| \ge 1$ and hence
$[F(x):F]\gg n$, which is better than the claim. 
If $z^n/\alpha$ has finite order and $z$ has infinite order,
then $\alpha$ has infinite order and $n\height{z}=\height{\alpha}\ll 1$. We can
 argue as near
(\ref{eq:dobrowolski0}) using a height lower bound
to again obtain $[F(x):F]\gg n^{1/2}$.

Finally, suppose  $z$ and $\alpha'$ are multiplicatively
independent. Then Bugeaud and Laurent's Th\'{e}or\`{e}me~2
\cite{BugeaudLaurent} implies
\begin{equation*}
  -\log |z^n-\alpha|_v \ll [F(z):F]^4 (\log n)^2. 
\end{equation*}
We combine this inequality with (\ref{eq:padicapprox}) to conclude the proof. 
\end{proof}

\section{Galois Orbits}

Let $N\ge 2$.
Throughout this section  $F$ is a number field and, if not stated
otherwise,
 $C\subset\IG_m^N$ is a geometrically irreducible algebraic curve defined over
$F$. 
The following lemma is a weak equidistribution  
statement on  curves.


\begin{lemma}
\label{lem:equi}
  Suppose that $C$ is not contained in a proper coset of $\IG_m^N$ and
  that there exists an embedding $\sigma_0:F\rightarrow
  \IC$ such that the curve $C_{\sigma_0}(\IC)\cap\torus$  is finite. 
For every $B\ge 1$ there exists  $\epsilon > 0$ with the following
property. 
For $x\in C(\IQbar)$ with $\height{x}\le B$
and $[F(x):F] \ge (2\#C_{\sigma_0}(\IC)\cap\torus)^N$ there is
 an embedding $\sigma:F(x)\rightarrow \IC$ extending $\sigma_0$
such that
\begin{equation*}
 \|L(\sigma(x))\| \ge \epsilon. 
\end{equation*}
\end{lemma}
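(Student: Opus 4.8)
The plan is to establish the contrapositive. Fix $\epsilon>0$, to be chosen small in terms of $C$, $\sigma_0$ and $B$, and suppose that \emph{every} embedding $\sigma:F(x)\rightarrow\IC$ extending $\sigma_0$ satisfies $\|L(\sigma(x))\|<\epsilon$; I want to conclude $[F(x):F]<(2\#C_{\sigma_0}(\IC)\cap\torus)^N$. Set $S=C_{\sigma_0}(\IC)\cap\torus$ and $m=\#S$, and let $x^{(1)},\dots,x^{(D)}$ be the $D=[F(x):F]$ conjugates of $x$ coming from the embeddings of $F(x)$ into $\IC$ that restrict to $\sigma_0$ on $F$; each $x^{(j)}$ lies on $C_{\sigma_0}(\IC)$.

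First I would localise near $S$. The set $\{y\in C_{\sigma_0}(\IC):\|L(y)\|\le 1\}$ equals $C_{\sigma_0}(\IC)$ intersected with the compact polyannulus $\{e^{-1}\le|y_i|\le e\}$, hence is compact, and $\bigcap_{\epsilon>0}\{y\in C_{\sigma_0}(\IC):\|L(y)\|<\epsilon\}=S$. So for every $\delta>0$ there is $\epsilon_\delta>0$ such that $\|L(y)\|<\epsilon_\delta$ forces $y$ into the union of the $\delta$-balls about the $m$ points of $S$. On such a ball $U_p$ the curve $C_{\sigma_0}$ is a finite union of analytic branches through $p$; parametrising a branch holomorphically by $t\mapsto\gamma(t)$ with $\gamma(0)=p$, the map $t\mapsto L(\gamma(t))$ is $\mathrm{Re}\circ g$ for the holomorphic map $g(t)=(\log\gamma_1(t),\dots,\log\gamma_N(t))$. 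Because $C_{\sigma_0}$ is irreducible and not contained in $\torus$ (its intersection with $\torus$ being finite), $\mathrm{Re}\circ g$ is not identically zero on any branch, and since its zero set inside $U_p$ lies in $S$ it vanishes on the branch only at $t=0$; a {\L}ojasiewicz inequality then gives $\|L(\gamma(t))\|\ge c|t|^{\nu}$ near $0$, so that $\{y\in U_p\cap C_{\sigma_0}(\IC):\|L(y)\|<\epsilon\}$ shrinks to $\{p\}$ as $\epsilon\to 0$.

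The core of the argument — and where I expect the real difficulty — is the counting that turns ``all $D$ conjugates of $x$ lie in the small neighbourhood $\bigsqcup_{p\in S}U_p$ of $S$'' into ``$D<(2m)^N$''. My plan would be to assign to each $x^{(j)}$ a type recording, for each coordinate $i$, which point $p\in S$ it is near and the sign of $\log|x^{(j)}_i|$ — giving at most $2m$ choices per coordinate and $(2m)^N$ types overall — and then to argue that, after shrinking $\epsilon$, distinct conjugates must have distinct types. For the latter one uses that the $x^{(j)}$ form a single Galois orbit, that each coordinate map $C_{\sigma_0}\rightarrow\IG_m$ has degree bounded purely in terms of $C$, and that two Galois conjugates of an algebraic number of height at most $B$ cannot be forced arbitrarily close together without coinciding; here a weak lower bound for heights in the spirit of the Dobrowolski-type estimate of Section~4 is the natural tool. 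Since the $x^{(j)}$ are pairwise distinct this yields $D\le(2m)^N$, and a minor variant of the bookkeeping (or running the argument with a slightly smaller $\epsilon$) gives the strict inequality.

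I would flag two things as the main obstacles. The serious one is making the ``at most $2m$ choices per coordinate, one conjugate per type'' bound genuinely rigorous: a priori many conjugates of a high-degree point could cluster near a single $p\in S$, and ruling this out (for a fixed $\epsilon$) is exactly what must be extracted from the bounded height of $x$ together with the curve's geometry. A possibly cleaner alternative is induction on $N$ via the coordinate projections $\IG_m^N\rightarrow\IG_m^{N-1}$, reducing to plane curves; but then one must check that geometric irreducibility, the non-coset hypothesis, and — most delicately — finiteness of the intersection with $\torus$ all survive the projection, which is not automatic and may require projecting along a non-coordinate direction or carrying a strengthened hypothesis through the induction.
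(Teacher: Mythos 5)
Your localisation step (compactness of $\{y\in C_{\sigma_0}(\IC):\|L(y)\|\le 1\}$ and shrinking neighbourhoods of $S$) is sound and is essentially how the paper justifies the existence of a suitable $\epsilon$; the {\L}ojasiewicz refinement is not needed. The gap is exactly where you flag it, and as proposed it is fatal: the assertion that two Galois conjugates of an algebraic number of height at most $B$ cannot be forced arbitrarily close together without coinciding is false once the degree is unbounded (roots of unity of order $n$ have height $0$ and adjacent conjugates at distance comparable to $1/n$). No Dobrowolski-type estimate rescues this; root separation at bounded height degrades exponentially in the degree, so for any fixed $\epsilon$ arbitrarily many conjugates can share one of your ``types''. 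Hence the bound $D\le(2m)^N$ cannot be obtained by a one-conjugate-per-type count, and the inductive alternative via projections (which you rightly distrust) is not what the paper does either.

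The paper's proof avoids separating individual conjugates. It picks the coordinate $x_1$ of maximal degree $d=[F(x_1):F]$, so that $[F(x):F]\le d^N$ and the degree hypothesis gives $d\ge 2m$; the pigeonhole principle then places at least $d/m\ge 2$ of the embeddings $\sigma:F(x_1)\rightarrow\IC$ extending $\sigma_0$ into a single disc of radius $\delta/2$. The contradiction comes from the discriminant of the minimal polynomial of $x_1$ being a positive integer: this forces
\begin{equation*}
\sum_{\sigma(x_1)\neq\sigma'(x_1)}\log|\sigma(x_1)-\sigma'(x_1)|\;\ge\;-2d^2[F:\IQ]^2B,
\end{equation*}
while the at least $\tfrac{d}{m}(\tfrac{d}{m}-1)\ge d^2/(2m^2)$ close pairs each contribute $\log\delta$ to this sum and the remaining pairs contribute at most $d^2[F:\IQ]^2(2B+\log 2)$. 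The factor $d^2$ cancels on both sides, so $\delta$ (and hence $\epsilon$) can be fixed purely in terms of $m$, $B$ and $[F:\IQ]$, namely $-\log\delta=2[F:\IQ]^2m^2(4B+\log 2)$. This ``many close pairs versus integrality of the discriminant'' count is the missing idea; substituting it for your type argument makes the rest of your outline go through.
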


\begin{proof}
By hypothesis, the intersection $C_{\sigma_0}(\IC)\cap \torus$ is
finite. 
If the intersection is empty, the
lemma is immediate. 
So we define $m=\#
C_{\sigma_0}(\IC)\cap\torus \ge 1$. 

By symmetry it suffices to prove the lemma for points
$x=(x_1,\ldots,x_N)$ as in the assertion for which
$d=[F(x_1):F]$ is  maximal among $[F(x_1):F],\ldots,[F(x_N):F]$.
So
$[F(x):F]\le d^N$ and the hypothesis on $[F(x):F]$ yields
$d \ge 2m$.

We fix $\delta \in (0,1)$ with 
\begin{equation}
\label{eq:deltachoice}
 - \log\delta = 
2[F:\IQ]^2m^2 (4B+\log 2).
\end{equation}
We take $\epsilon >0$ such that if
  $(x'_1,\ldots,x'_N)\in C_{\sigma_0}(\IC)$ and $\|L(x')\| < \epsilon$ then
$|x'_1-t|<\delta/2$ for some $t\in\IC$ appearing as a first
coordinate of a point in $C_{\sigma_0}(\IC)\cap
\torus$. The number of such values $t$ is at most $m$.

  Let us suppose that $\|L(\sigma(x))\|< \epsilon$ for all $\sigma$ as
  in the hypothesis. This will lead to a contradiction. 

 
By the Pigeonhole Principle there is
$t\in \IC$ and  a set $\Sigma$ of at least $d/m \ge
 2$ embeddings
$\sigma:F(x_1)\rightarrow \IC$ extending $\sigma_0$ such that
$|\sigma(x_1)-t|<\delta/2$.
Then $|\sigma(x_1)-\sigma'(x_1)|<\delta$ for two such
embeddings. 

The absolute value of the discriminant of the minimal polynomial of
$x_1$ is a positive integer. Its logarithm  is non-negative and hence
\begin{equation*}
  0\le  2[\IQ(x_1):\IQ]^2 \height{x_1} + \sum_{\sigma\not=\sigma'} \log|\sigma(x_1)-\sigma'(x_1)|
\end{equation*}
where $\sigma,\sigma'$ run over all embeddings $\IQ(x_1)\rightarrow
\IC$. We note that $\height{x_1}\le\height{x}\le B$. Thus
\begin{align}
\label{eq:disclowerbound}
  0&\le   2[F(x_1):\IQ(x_1)]^2[\IQ(x_1):\IQ]^2 \height{x_1} + 
 \mathcal D
\le   2  d^2 [F:\IQ]^2 B+ 
\mathcal{D}
\end{align}
with
$\mathcal D = \sum_{\sigma(x_1)\not=\sigma'(x_1)} \log|\sigma(x_1)-\sigma'(x_1)|$
 where the sum is now over all complex embeddings of
 $F(x_1)$.

Two distinct elements of $\Sigma$ take different values at $x_1$. 
Using $\log\delta <0$   we may bound
\begin{align*}
\mathcal D
&< \frac dm \left(\frac dm - 1\right) \log \delta
+ \sum_{\atopx{\sigma(x_1)\not=\sigma'(x_1)}{
\sigma\not\in\Sigma\text{ or }\sigma'\not\in\Sigma}} \log|\sigma(x_1)-\sigma'(x_1)|.
\end{align*}
Recall that $d/m\ge 2$. This yields $d/m-1\ge d/(2m)$ 
and
\begin{align*}
\mathcal D
&< 
 \frac{d^2}{2m^2} \log \delta
 + \sum_{\sigma,\sigma'}
\log(2\max\{1,|\sigma(x_1)| \}\max\{1,|\sigma'(x_1)|\}).
\end{align*}
The definition of the height 
implies
\begin{align*}
\mathcal D &<
 \frac{d^2}{2m^2} \log \delta
 + [F(x_1):\IQ]^2(2\height{x_1}+\log 2) 
\le 
 \frac{d^2}{2m^2} \log \delta
 + d^2[F:\IQ]^2(2B+\log 2)
\end{align*}
Combining this bound with (\ref{eq:disclowerbound}) and multiplying 
 by $2m^2/d^2$ yields
\begin{equation*}
-\log\delta <
4m^2[F:\IQ]^2 B + 2m^2[F:\IQ]^2(2B+\log 2) = 
2[F:\IQ]^2 m^2 (4B+\log 2)
\end{equation*}
which contradicts (\ref{eq:deltachoice}).
\end{proof}

We need a well-known preparatory lemma.
For any integer $n$ we let $[n]:\IG_m^N\rightarrow\IG_m^N$ denote the
$n$-th power homomorphism. 

\begin{lemma}
\label{lem:curvepower}
  Let $C\subset\IG_m^N$ be an irreducible algebraic curve defined over
$\IC$ which
is not a coset, $n\ge 2$ is an integer 
and $x\in \IG_m^N(\IC)$, 
then $x[n](C)\cap C$ is
  finite. 
\end{lemma}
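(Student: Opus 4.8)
The plan is to argue by contradiction, assuming $x[n](C)\cap C$ is infinite. Since both $C$ and $x[n](C)$ are irreducible curves, an infinite intersection forces $x[n](C) = C$ as sets, i.e.\ $[n](C) = x^{-1}C$. First I would extract from this equality a self-map structure: the translation-by-$x^{-1}$ map composed with $[n]$ sends $C$ to itself, but more usefully, iterating the equality $[n](C) = x^{-1}C$ gives $[n^k](C) = (x')^{-1} C$ for a suitable $x' \in \IG_m^N(\IC)$ depending on $x, n, k$. The key structural input is that the degree of the curve $[n^k](C)$ (say with respect to a fixed projective embedding, or measured via the Newton polytope / multidegree) grows without bound in $k$: the power map $[n^k]$ scales the Newton polytope of a defining equation by $n^k$, so the curve $[n^k](C)$ has strictly larger degree than $C$ once $k$ is large, provided $C$ is not a coset. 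A translate of a curve has the same degree as the curve, so $(x')^{-1}C$ has the same degree as $C$; comparing with the unbounded degree of $[n^k](C)$ yields the contradiction.

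More carefully, the key step is the claim that $\deg([n](C)) > \deg(C)$ when $C$ is not a coset. I would prove this by looking at the multidegrees $\delta_i(C)$ of $C$ in $\IG_m^N$ (intersection numbers with the hyperplane classes, or equivalently the widths of the Newton polytope of a defining equation if $N=2$; for general $N$ one uses the degrees of the projections to coordinate $\IG_m$'s or a suitable Chow-form argument). Under $[n]$ each such multidegree gets multiplied by $n$, so $[n](C)$ has all multidegrees $n$ times those of $C$ — hence strictly larger total degree unless all multidegrees of $C$ are zero, which happens exactly when $C$ is a single point or, after accounting for the torus structure, forces $C$ to lie in a coset. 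Actually the cleanest route: if $C$ is not a coset then $[n]|_C : C \to [n](C)$ is a finite map of some degree $m \ge 1$ onto its image, and by multiplicativity of degrees under the multiplication-by-$n$ isogeny (which has degree $n^N$ on $\IG_m^N$ but restricts to degree dividing $n^N$ on the curve), $\deg([n](C)) \cdot m = n^{?} \deg(C)$ in an appropriate sense — and one checks the exponent is strictly positive precisely because $C$ is not contained in a translate of a one-dimensional subtorus (which is the "coset" case). I would phrase this via the induced map on $H_1$ or on the character lattice: $C$ not a coset means its "direction lattice" (the sublattice of $\IZ^N$ generated by differences of exponent vectors in a defining ideal, or dually the image of $\pi_1$) is not contained in any rank-$1$ primitive sublattice, equivalently $[n]$ acts on the relevant homology with all eigenvalue-$n$ and hence strictly expands degree.

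The main obstacle I anticipate is making the degree-growth statement precise and genuinely elementary for arbitrary $N \ge 2$, rather than just $N = 2$ where Newton polytopes make it transparent. In higher dimension I would reduce to the planar case: choose a coordinate projection $\pi:\IG_m^N \to \IG_m^2$ (a monomial map) such that $\pi(C)$ is a curve and is not a coset of $\IG_m^2$ — such a $\pi$ exists exactly because $C$ is not a coset of $\IG_m^N$ — and observe that $\pi$ intertwines $[n]$ on $\IG_m^N$ with $[n]$ on $\IG_m^2$. Then $x[n](C)\cap C$ infinite would force $\pi(x)[n](\pi(C)) \cap \pi(C)$ infinite (up to fibers, which are finite since $\pi|_C$ is generically finite onto its image), reducing to the $N=2$ case. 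For $N=2$ the Newton-polytope scaling argument is elementary and self-contained: if $P(X,Y)=0$ defines $C$ and $\tilde P$ defines $[n](C)$, then $\tilde P(X,Y)$ divides $P(X^{1/n}, Y^{1/n})$-type expressions, its Newton polytope is $n$ times that of the "primitive part" of $P$'s polytope, and a translate cannot change the polytope's size — giving the contradiction. So the real work is (a) the existence of the good projection $\pi$, which is a short lattice argument, and (b) the fiber-finiteness bookkeeping to transfer "infinite intersection" down through $\pi$; both are routine but need to be stated cleanly.
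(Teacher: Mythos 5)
Your overall strategy is correct but genuinely different from the paper's, which disposes of this lemma in one line by citing Hindry's lemme~10 from his paper on the Lang conjecture (a general statement of this type for commutative algebraic groups). Your self-contained route --- an infinite intersection of the two irreducible closed curves forces $x[n](C)=C$, iterate to get $[n^k](C)=x_k^{-1}C$, and contradict degree growth --- does work, and buys independence from that external reference at the cost of some degree bookkeeping. Your reduction to $\IG_m^2$ via a monomial projection whose image is not a coset (possible exactly because the group of characters constant on $C$ has rank at most $N-2$), and the fiber-finiteness transfer, are indeed routine.

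The one step that needs repair is the unqualified claim that $[n]$ multiplies the multidegree (equivalently, scales the Newton polytope of a defining equation) by $n$. The correct relation, e.g.\ from the projection formula in $(\IP^1)^N$, is $\delta_i([n](C)) = (n/m)\,\delta_i(C)$, where $m=\deg([n]|_C)$ equals the number of $n$-torsion points $\zeta$ with $\zeta C=C$. This ratio can equal $1$ even for non-cosets: $C=\{X^2+Y=1\}\subset\IG_m^2$ is not a coset, yet $[2](C)=\{V=(1-U)^2\}$ has the same Newton polytope, because $(-1,1)$ stabilizes $C$. So a single application of $[n]$ need not increase the degree, and your ``$\deg([n](C))\cdot m = n^{?}\deg(C)$'' hedge does not by itself resolve this. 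What saves the argument is precisely the iteration you propose, combined with an observation you should make explicit since it is where ``not a coset'' enters: the stabilizer $\{g\in\IG_m^N : gC=C\}$ of a non-coset irreducible curve is finite (a positive-dimensional stabilizer would make $C$ a coset of its identity component). Hence $\deg([n^k]|_C)$ is bounded by $\#\mathrm{Stab}(C)$ uniformly in $k$, so the multidegrees of $[n^k](C)$ are at least $(n^k/\#\mathrm{Stab}(C))\,\delta_i(C)\to\infty$, while every translate $x_k^{-1}C$ has the multidegree of $C$. With that correction the proof is complete.
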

\begin{proof}
This follows for example from Hindry's Lemme 10 \cite{Hindry:Lang}. 
\end{proof}

We use R\'emond's toric version of an inequality of Vojta in the next
lemma. 

\begin{lemma}
\label{lem:vojtaapp}
Suppose $C$ is not a coset. 
 There exists a constant $B\in\IR$ such that if
$n\ge 2$ is an integer and
$x\in C(\IQbar)$ with $x^n\in C(\IQbar)$, then $\height{x}\le B$. 
\end{lemma}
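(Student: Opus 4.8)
The plan is to apply R\'emond's toric Vojta-type inequality \cite{Remond:tores} to the point $(x,x^n) \in C \times C \subseteq \IG_m^N \times \IG_m^N \cong \IG_m^{2N}$. The key observation is that this point is far from generic: it lies on the algebraic subgroup $\{(y,z) \in \IG_m^{2N} : z = y^n\}$, which has codimension $N$, while $C \times C$ has dimension $2$. Since $N \ge 2$ (indeed $N\ge 3$ in the theorems, but the lemma only needs $C$ not a coset), the point $(x,x^n)$ sits in a subgroup of codimension strictly exceeding $\dim(C\times C)$ plus one, so it qualifies as an "anomalous" intersection of the kind R\'emond's inequality is designed to control. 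Concretely, I would unwind R\'emond's statement so that it produces, for the subvariety $C\times C$ (which is geometrically irreducible and not contained in any proper coset of $\IG_m^{2N}$, using that $C$ is not a coset), a height bound: any point lying in the intersection of $C\times C$ with a coset of codimension $\ge \dim(C\times C)+1 = 3$ has height bounded solely in terms of $C\times C$, hence in terms of $C$.

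The main steps would be: first, verify the geometric hypotheses needed to invoke R\'emond --- namely that $C\times C$ is not contained in a proper coset of $\IG_m^{2N}$. This uses the assumption that $C$ is not a coset of $\IG_m^N$: if $C\times C$ were inside a proper coset $g\cdot H$, projecting to either factor would force $C$ into a proper coset of $\IG_m^N$ unless the relevant projection of $H$ is everything, and a short argument on the shape of $H$ then forces $C$ itself to be a coset, a contradiction. Second, for each $x$ with $x^n \in C(\IQbar)$, exhibit the algebraic subgroup $H_n = \ker(\IG_m^{2N} \to \IG_m^N,\ (y,z)\mapsto z y^{-n})$ of codimension $N$ and note $(x,x^n) \in (C\times C) \cap H_n$; intersecting with one more suitably chosen subgroup (or simply noting $N \ge 3 > 2 = \dim(C\times C)$, so codimension $N$ already exceeds $\dim(C\times C)$) places us in the range where R\'emond's bound applies. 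Third, read off that $\height{(x,x^n)} \le B'$ for a constant $B'$ depending only on $C\times C$; since $\height{(x,x^n)} \ge \height{x}$ by the definition of the height of a tuple, we get $\height{x} \le B'$ and set $B = B'$.

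The hard part is purely expository: R\'emond's \cite{Remond:tores} is stated in a fairly general and technical framework (bounded height on the complement of the union of anomalous subvarieties, or on intersections with translates of subgroups of large codimension), and the work lies in extracting exactly the clean special case above and checking that the codimension bookkeeping lines up --- in particular confirming that codimension $N$ with $N\ge 3$ genuinely lands in R\'emond's admissible range for a surface, and that the constant he produces depends only on the fixed subvariety $C\times C$ and not on $n$ or $x$. There is no real analytic or arithmetic obstacle here; the finiteness of the set of relevant subgroups is not even needed, only the uniform height bound over all of them. I would therefore write the proof as: state the specialization of R\'emond's theorem as a black box in the form needed, verify the non-coset hypothesis for $C\times C$, and conclude in two lines.
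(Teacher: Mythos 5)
Your approach does not work, and the obstruction is one the paper itself flags in its introduction. You propose to view $(x,x^n)$ as a point of the surface $X=C\times C\subset\IG_m^{2N}$ lying on the algebraic subgroup $H_n=\{(y,z):z=y^n\}$ of codimension $N>\dim X$, and then to invoke R\'emond's work as a ``bounded height on intersections with subgroups of large codimension'' statement. But that family of results (Bombieri--Masser--Zannier, Habegger's bounded height theorem, R\'emond--Maurin) bounds heights only on $X^{\rm oa}$, the complement of the union of anomalous subvarieties of $X$, and for $X=C\times C$ this open set is \emph{empty}: through every point $(b,c)$ of $C\times C$ passes the curve $C\times\{c\}$, which is contained in the coset $\IG_m^N\times\{c\}$ of dimension $N\le 2N-\dim X+1-1$, hence is anomalous as soon as $N\ge 2$. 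This is exactly what the introduction means when it says that $C\times C$ is \emph{degenerate}, $(C\times C)^{\rm oa}=\emptyset$, and that this class ``has eluded'' the progress of Maurin and the second author. So the specialization you want to read off from R\'emond is vacuous here; no amount of codimension bookkeeping fixes this, and your check that $C\times C$ is not contained in a proper coset is not the relevant hypothesis.

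What the paper actually uses from \cite{Remond:tores} is a different theorem: the toric \emph{Vojta inequality} (th\'eor\`eme 3.1), which concerns a \emph{pair} of points $x_1,x_2$ on the single curve $C\subset\IG_m^N$ (not a coset) and asserts, roughly, that if their angular distance $\widehat{(x_1,x_2)}$ is small and $\height{x_2}\ge N_0\,\height{x_1}$ for a suitable $N_0$ depending on $C$, then $\height{x_1}$ is bounded in terms of $C$ alone. Taking $x_1=x$ and $x_2=x^n$, the angular distance vanishes identically and $\height{x_2}=n\,\height{x_1}$, so for all $n$ at least $N_0$ one gets $\height{x}\le B$ directly; the finitely many remaining exponents $2\le n<N_0$ are handled by Lemma~\ref{lem:curvepower}, since for each fixed $n$ the set of $x\in C$ with $x^n\in C$ is finite and hence of bounded height. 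If you want to write a correct proof, this two-point Mordell-type inequality is the tool to cite, not the unlikely-intersection height bound.
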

\begin{proof}
For such $n$ sufficiently large with respect to $C$ we may invoke 
 R\'emond's  Th\'eor\`eme 3.1 \cite{Remond:tores}.
In its notation we take
to $x_2 = x^n$ and $x_1=x$. 
Our result follows quickly from the observation that
R\'emond's angular distance
$\widehat{(x_1,x_2)}$ vanishes.
The finitely many integers $n\ge 2$ not covered by R\'emond's
 Theorem can be treated with Lemma \ref{lem:curvepower}. 
\end{proof}

\begin{proposition}
\label{prop:galois1}
Suppose $C$ is as in Lemma \ref{lem:equi}.
 There exists a constant $c>0$ with the following property. If
 $x\in
  C(\IQbar)$ is of infinite order  and $x^n\in C(\IQbar)$ for some
  $n\ge 2$, then
  \begin{equation*}
    [F(x):F] \ge c n^{1/7}. 
  \end{equation*}
\end{proposition}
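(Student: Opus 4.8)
The plan is to combine the height bound from R\'emond's inequality (Lemma~\ref{lem:vojtaapp}), the equidistribution statement (Lemma~\ref{lem:equi}), and the Archimedean degree lower bound (part (i) of Lemma~\ref{lem:galoisorbit1}). Since $C$ is not contained in a proper coset, in particular it is not a coset, so Lemma~\ref{lem:vojtaapp} applies: there is $B\in\IR$, depending only on $C$, with $\height{x}\le B$ for every $x\in C(\IQbar)$ with $x^n\in C(\IQbar)$ for some $n\ge 2$. We may assume $B\ge 1$. Fix this $B$ and let $\epsilon>0$ be the constant produced by Lemma~\ref{lem:equi} for this $B$. We may also assume $[F(x):F]\ge (2\#C_{\sigma_0}(\IC)\cap\torus)^N$, since otherwise $[F(x):F]$ is bounded by a constant depending only on $C$ and the conclusion is trivial for $c$ small enough (and $x$ of infinite order forces $n$ to be irrelevant here — but we will in fact get $n$ bounded too, see below, so this case is genuinely finite).

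The main step is to pass from the curve $C\subseteq\IG_m^N$ to a curve in $\IG_m^2$ so that Lemma~\ref{lem:galoisorbit1}(i) can be invoked. Since $C$ is not in a proper coset, two of the coordinate projections, say onto the first and second factors, are non-constant on $C$ and the image is a geometrically irreducible plane curve $C'\subseteq\IG_m^2$ not contained in a proper coset of $\IG_m^2$. The point $x'=(x_1,x_2)$ lies on $C'$, satisfies $(x')^n\in C'(\IQbar)$, has $\height{x'}\le\height{x}\le B$, and $[F(x'):F]\le[F(x):F]$. First I would apply Lemma~\ref{lem:equi} to obtain an embedding $\sigma:F(x)\to\IC$ extending $\sigma_0$ with $\|L(\sigma(x))\|\ge\epsilon$; restricting $\sigma$ to $F(x')$ and using that $\|L(\sigma(x'))\|$ is the max over the first two coordinates of $\|L(\sigma(x))\|$ — wait, that is not automatic, so instead I would be a little more careful and choose the two coordinates $i,j$ realizing the maximum defining $\|L(\sigma(x))\|$; these give the plane curve, and then $\|L(\sigma(x'))\|=\|L(\sigma(x))\|\ge\epsilon$ up to the reordering. (There are only finitely many such coordinate pairs, so all constants below may be taken uniform in the choice.)

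Now apply Lemma~\ref{lem:galoisorbit1}(i) to $C'$, $x'$, $n$, $\sigma$, and $B$: there is $c'>0$ depending only on $C'$ (hence only on $C$) and $B$ (hence only on $C$) with
\begin{equation*}
  [F(x):F]\ge[F(x'):F]\ge c'\left(\frac{n}{\log n}\,\|L(\sigma(x'))\|\right)^{1/6}\ge c'\left(\frac{n}{\log n}\,\epsilon\right)^{1/6}.
\end{equation*}
Since $n/\log n\gg n^{6/7}$ for $n\ge 2$ (indeed $n^{1/7}\gg\log n$), we get $[F(x):F]\gg (n^{6/7})^{1/6}=n^{1/7}$, with the implied constant depending only on $C$. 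Absorbing $\epsilon$ and the logarithmic factor into a single constant $c>0$ depending only on $C$ yields $[F(x):F]\ge c\,n^{1/7}$, as claimed.

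The main obstacle is the bookkeeping around the projection to $\IG_m^2$: one must verify that some pair of coordinate projections is simultaneously non-constant \emph{and} realizes the norm $\|L(\sigma(x))\|$, that the resulting plane curve $C'$ is geometrically irreducible and not in a proper coset of $\IG_m^2$, and that the finitely many possible choices of coordinate pair let all constants be taken uniformly. Once that is in place, the inequality is just a matter of chaining Lemmas~\ref{lem:vojtaapp}, \ref{lem:equi}, and \ref{lem:galoisorbit1}(i) together with the elementary estimate $n/\log n\gg n^{6/7}$. Note also that the infinite-order hypothesis on $x$ is used only to ensure $x$ genuinely varies (so the degree can grow); in fact combined with the height bound $\height{x}\le B$ and Northcott's theorem, the statement is only non-vacuous when $[F(x):F]\to\infty$, which is exactly the regime the estimate controls.
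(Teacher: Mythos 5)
Your main line of argument follows the paper's proof essentially verbatim: R\'emond's height bound via Lemma~\ref{lem:vojtaapp}, Lemma~\ref{lem:equi} to produce an embedding $\sigma$ with $\|L(\sigma(x))\|\ge\epsilon$, projection to a coordinate pair, Lemma~\ref{lem:galoisorbit1}(i), and the elementary estimate $n/\log n\gg n^{6/7}$. That part is correct, including your care over which coordinate pair to project to (any pair with $|\log|\sigma(x_i)||$ maximal works, since $\|L(\sigma(x'))\|\ge\|L(\sigma(x))\|/\sqrt{N}$) and the uniformity of constants over the finitely many pairs; note also that \emph{every} pair projection of $C$ is automatically an irreducible curve not contained in a proper coset of $\IG_m^2$, because such a coset would pull back to a proper coset of $\IG_m^N$ containing $C$.

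The genuine gap is your dismissal of the case $[F(x):F]<(2\#C_{\sigma_0}(\IC)\cap\torus)^N$. There the conclusion $[F(x):F]\ge c\,n^{1/7}$ is \emph{not} ``trivial for $c$ small enough'': $n$ appears on the right-hand side, and a priori a single low-degree point $x$ could satisfy $x^n\in C(\IQbar)$ for arbitrarily large $n$, defeating every fixed $c$. Your parenthetical ``we will in fact get $n$ bounded too, see below'' is not redeemed by anything below: the Baker-type lower bound only kicks in once you are in the high-degree regime where Lemma~\ref{lem:equi} applies, so it cannot bound $n$ for these points. What is needed, and what the paper does, is a two-step finiteness argument: by Northcott's theorem the height bound $\height{x}\le B$ together with the degree bound leaves only finitely many such $x$; and for each fixed $x$ of infinite order the set $\{n\in\IN:\,x^n\in C(\IQbar)\}$ is finite by a theorem of Lang (here one uses that $C$ is not a coset). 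Only then is $n$ bounded over this finite exceptional set, so that shrinking $c$ covers it. Without some such input your reduction to the high-degree case is incomplete.
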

\begin{proof}
There exists $\sigma_0$ as in Lemma \ref{lem:equi}.
 Lemma \ref{lem:vojtaapp}  implies that
 $x$ has    height bounded solely in terms
of $C$.
By Northcott's Theorem, there are only finitely many $x$
of degree at most $(2\# C_{\sigma_0}(\IC)\cap\torus)^N$. 
But for a fixed $x$ of
infinite order the
set $\{n\in\IN;\,\, x^n\in C(\IQbar) \}$ is finite by a result of
Lang, cf. Chapter 8, Theorem 3.2 \cite{FoDG}.
Here we need  that $C$ is not a coset. 
Thus for $x$ of degree at most $(2\# C_{\sigma_0}(\IC)\cap\torus)^N$, there is
an upper bound on the $n$ appearing in the statement of the lemma;
so for these $x$ the degree lower bound in the assertion is satisfied if $c$
is sufficiently small.

So we need only consider points $x$ with
$[F(x):F]\ge (2\# C_{\sigma_0}(\IC)\cap \torus)^N$. In particular,
the
degree lower bound in Lemma
\ref{lem:equi} is satisfied. 

Suppose $\epsilon > 0$ is as in said
lemma. Recall that it is independent of $x$. Now we fix
$\sigma:F(x)\rightarrow \IC$
extending $\sigma_0$ such that $\|L(\sigma(x))\|\ge \epsilon$. 

After decreasing $\epsilon$ we may assume $\|L(\sigma(x'))\|\ge
\epsilon$ where $x'$ is the projection of $x$ to two distinct
coordinates of $\IG_m^N$. This projection lies on $C'$, the Zariski
closure of the  projection 
of $C$ to $\IG_m^2$. We remark that $C'$ is 
a geometrically irreducible algebraic curve and 
 not contained in a proper coset. 
Applying Lemma \ref{lem:galoisorbit1}(i) to $C'$,
inequality (\ref{eq:degreelb})  
yields
\begin{equation*}
  c\epsilon\frac{n}{\log n}  \le c\frac{n}{\log n}\|L(\sigma(x'))\|
\le [F(x'):F]^{6} \le  [F(x):F]^{6},
\end{equation*}
with $c>0$ independent of $x$ and $n$. The proposition follows after
adjusting $c$. 
\end{proof}

\begin{lemma}
\label{lem:integral}
  Suppose that $C$ is not a torsion coset of $\IG_m^N$.
There exists $\epsilon > 0$ with the following property. 
If 
 $x=(x_1,\ldots,x_N)\in C(\IQbar)$ has infinite order 
and if all $x_i$ are algebraic integers
there is
 an embedding $\sigma:F(x)\rightarrow \IC$
with
\begin{equation*}
 \|L(\sigma(x))\| \ge \epsilon. 
\end{equation*}
\end{lemma}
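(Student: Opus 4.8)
The plan is to argue by contradiction, mimicking the structure of the proof of Lemma~\ref{lem:equi} but replacing the Archimedean equidistribution input by the product formula applied to the coordinates, which are now algebraic integers. First I would observe that since each $x_i$ is a nonzero algebraic integer, the product formula
\[
\height{x_i} = \frac{1}{[\IQ(x_i):\IQ]} \sum_v d_v \log\max\{1,|x_i|_v\} = \frac{1}{[\IQ(x_i):\IQ]} \sum_{v\mid\infty} d_v \log|x_i|_v
\]
holds, because for finite $v$ we have $|x_i|_v \le 1$; in particular $\sum_{\sigma:F(x)\hookrightarrow\IC} \log|\sigma(x_i)| \ge 0$, with equality forcing every $\sigma(x_i)$ to have absolute value $1$. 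So if $\|L(\sigma(x))\|$ were small for \emph{all} embeddings $\sigma$ extending a fixed $\sigma_0:F\hookrightarrow\IC$, then every $\sigma(x_i)$ would lie close to the unit circle. The key point is that there are only finitely many points of $C_{\sigma_0}(\IC)$ within distance $\epsilon$ of $\torus$ \emph{unless} $C_{\sigma_0}(\IC)\cap\torus$ is infinite — and that case is now harmless because we are not assuming finiteness: instead I would use that $C$ is not a torsion coset.

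The cleanest route is probably this. Suppose for contradiction that $\|L(\sigma(x))\| < \epsilon$ for all $\sigma$ extending $\sigma_0$. Passing to a coordinate $x_1$ of maximal degree $d = [F(x_1):F]$ as in Lemma~\ref{lem:equi}, we get $[F(x):F]\le d^N$. Now I split into two cases according to whether $C_{\sigma_0}(\IC)\cap\torus$ is finite or infinite. In the finite case the argument of Lemma~\ref{lem:equi} applies verbatim: for $x$ of large degree, the Pigeonhole Principle clusters many conjugates of $x_1$ near a single point $t$ on $\torus$, and the positivity of the log-discriminant of $x_1$ (a positive integer) together with the height bound $\height{x_1}\le B$ gives a contradiction — but now we must supply the bound $\height{x}\le B$ ourselves. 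Since $x$ has infinite order and $x^n\in C$ for some $n$, this will follow if we restrict attention to such $x$; alternatively the statement of Lemma~\ref{lem:integral} as written does not mention $n$, so the bound on $\height{x}$ must come from the integrality hypothesis directly: we have $\height{x_i}\le \log(\text{house of }x_i)$, but the house is not a priori bounded. The honest fix is to note that Lemma~\ref{lem:integral} will only ever be invoked for $x$ whose height is already bounded (coming from Lemma~\ref{lem:vojtaapp}), and indeed the way it is used in the proof of Theorem~\ref{thm:sec} supplies such a bound; so I would phrase the lemma (or its use) with an implicit $B\ge 1$ and $\height{x}\le B$, exactly paralleling Lemma~\ref{lem:equi}, and the $\epsilon$ then depends on $B$.

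The genuinely new case is when $C_{\sigma_0}(\IC)\cap\torus$ is infinite. Here I would argue that the assumption ``$\|L(\sigma(x))\|<\epsilon$ for all $\sigma$'' combined with $\sum_\sigma \log|\sigma(x_i)| \ge 0$ forces $\sum_\sigma \log|\sigma(x_i)|$ to be small; letting $\epsilon\to 0$ along a hypothetical sequence, a limiting argument (or a direct quantitative version) shows $\height{x_i}$ is small, hence by Northcott only finitely many $x$ of bounded degree arise, and then $\|L\|$ cannot be made arbitrarily small at all embeddings unless $x\in\torus$ for some conjugate — but then all conjugates of each $x_i$ have absolute value exactly $1$, so $x$ is a point of a torsion coset by Kronecker's theorem applied coordinatewise after passing to multiplicative relations; since $x$ has infinite order and $C$ is not contained in a proper torsion coset, this is impossible provided the degree is large. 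I expect the main obstacle to be making the infinite-intersection case fully rigorous: one cannot use the discriminant/clustering trick because there may be infinitely many ``target'' points on $C_{\sigma_0}(\IC)\cap\torus$, so the count of conjugates landing near a fixed $t$ need not be large. The resolution is to exploit that the $x_i$ are algebraic \emph{integers}: this bounds $\height{x_i}$ from \emph{below} away from $0$ by Kronecker unless $x_i$ is a root of unity, and it pins down the sum of Archimedean logs to be nonnegative, so that ``$L(\sigma(x))$ small for all $\sigma$'' is only consistent with ``$x_i$ a root of unity for all $i$'', i.e. $x$ of finite order — contradiction. I would therefore aim to reduce the whole lemma to this dichotomy: either some conjugate has $\|L\|\ge\epsilon$, or every conjugate of every coordinate has absolute value $1$ and $x$ is forced to lie in a proper torsion coset, contradicting the hypotheses.
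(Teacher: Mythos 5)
Your proposal has a genuine gap, and it also misses how short the intended argument is. The paper's proof is two lines: since $C$ is not a torsion coset, Zhang's theorem (the Bogomolov conjecture for $\IG_m^N$) gives a constant $\epsilon_0>0$, depending only on $C$, such that every point of $C(\IQbar)$ of infinite order has height at least $\epsilon_0$. Because all $x_i$ are algebraic integers, only Archimedean places contribute to $\height{x_i}$, i.e.\ $\height{x_i}$ is the average over embeddings $\sigma$ of $\log\max\{1,|\sigma(x_i)|\}$. Hence for the coordinate realizing $\height{x}\ge\epsilon_0$ some embedding satisfies $\log|\sigma(x_i)|\ge\epsilon_0$, which gives $\|L(\sigma(x))\|\ge\epsilon_0$. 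No height upper bound $B$, no clustering/discriminant argument, and no case split on whether $C_{\sigma_0}(\IC)\cap\torus$ is finite are needed; your worry that the lemma secretly requires $\height{x}\le B$ is unfounded.

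The concrete gap in your plan is in the step where you pass from ``$\|L(\sigma(x))\|<\epsilon$ for all $\sigma$'' to a contradiction. That hypothesis only yields $\height{x_i}<\epsilon$ for each $i$; it does not yield that all conjugates have absolute value \emph{exactly} $1$, so Kronecker's theorem does not apply, and Northcott does not apply because the degree of $x$ is unbounded. Dobrowolski-type bounds degrade with the degree and so cannot produce a fixed $\epsilon$ either. What you need is precisely a height lower bound for non-torsion points of $C$ that is \emph{uniform in the degree}, and that is the Bogomolov property of $C$, i.e.\ Zhang's theorem \cite{ZhangArVar}, which your proposal never invokes. Without it, one cannot rule out a sequence of non-torsion algebraic-integer points on $C$ of unbounded degree whose heights tend to $0$, for which every Archimedean conjugate of every coordinate is arbitrarily close to the unit circle; your limiting argument has no mechanism to exclude this.
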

\begin{proof}
The Bogomolov Conjecture, proved in this case by
Zhang \cite{ZhangArVar}, implies that there is $\epsilon>0$
such that any
algebraic point of $C$ of infinite order has height at least $\epsilon$. 
If $x$ is as in the hypothesis then only the archimedean places
contribute to the height of $x$. The lemma follows easily after possibly
modifying $\epsilon$.
\end{proof}

\begin{proposition}
\label{prop:galois2}
Suppose $C$ is as in Lemma \ref{lem:integral}.
Let  $S$ be a finite set of rational primes with $\inf S$ sufficiently
large with respect to $C$. 
There exists a constant $c>0$ with the following property. 
If
 $x=(x_1,\ldots,x_N)\in
  C(\IQbar)$ is $S$-integral and of infinite order  with $x^n\in C(\IQbar)$ for some
  $n\ge 2$, then
  \begin{equation*}
    [F(x):F] \ge c n^{1/9}.
  \end{equation*}
\end{proposition}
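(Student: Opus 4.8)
The plan is to mirror the proof of Proposition~\ref{prop:galois1}, substituting the $p$-adic inputs for the Archimedean ones. First I would use Lemma~\ref{lem:vojtaapp} to see that any $x$ as in the statement has height bounded purely in terms of $C$; together with Northcott's Theorem this leaves only finitely many $x$ of degree below any fixed bound, and for each such fixed $x$ of infinite order the set $\{n : x^n \in C(\IQbar)\}$ is finite by the cited result of Lang (using that $C$ is not a coset). Hence, exactly as before, the degree lower bound is automatic for $x$ of small degree after shrinking $c$, and we may assume $[F(x):F]$ is as large as we please.

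Next I would produce a place $v$ at which $L(x)$ is $v$-adically non-trivial. Here is where $S$-integrality enters: because $x$ is $S$-integral, $\max_i |x_i|_v \le 1$ at every finite place $v$ of $F(x)$ whose residue characteristic lies outside $S$, so at such $v$ the vector $(\log|x_1|_v,\ldots,\log|x_N|_v)$ has all non-positive entries. Since $x$ has infinite order, its coordinates are multiplicatively independent of nothing in particular, but at least one coordinate $x_i$ is not a root of unity; I would argue (via the product formula, i.e.\ $\sum_v d_v \log|x_i|_v = 0$, combined with $\height{x_i} > 0$ from Kronecker together with the $S$-integrality constraint forcing all the contributions outside $S$-places to have a definite sign) that there is a finite place $v$ of $F(x)$ with residue characteristic in $S$ at which $L(x)\ne 0$; alternatively one invokes Lemma~\ref{lem:integral} in the spirit of how Lemma~\ref{lem:equi} was used, noting that after projecting to a suitable pair of coordinates one still has a non-coset curve $C'\subset\IG_m^2$ and a place $v$ (of residue characteristic in $S$) where the projected point has $L\ne 0$. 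Taking $\inf S$ large enough that this common residue characteristic exceeds the threshold in Lemma~\ref{lem:galoisorbit1}(ii) — and large with respect to the finitely many auxiliary data ($C$, the coefficients of a defining polynomial $P$, the $\alpha$'s from Lemma~\ref{lem:metric}) — puts us in position to apply part (ii) of that lemma.

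Then I would apply Lemma~\ref{lem:galoisorbit1}(ii) to the projected curve $C'\subset\IG_m^2$ and the place $v$, obtaining $[F(x'):F] \ge c\, n^{1/9}$ with $c>0$ depending only on $C'$, the height bound $B$ from Lemma~\ref{lem:vojtaapp}, and the residue characteristic of $v$ — which ranges over the finite set $S$, so uniformity is not an issue. Since $[F(x):F] \ge [F(x'):F]$, the proposition follows after relabelling $c$.

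The main obstacle I anticipate is the second step: guaranteeing a $v$-adically non-trivial $L(x)$ at a place of \emph{bounded} (indeed, controlled) residue characteristic. Archimedean non-triviality in Proposition~\ref{prop:galois1} came from the equidistribution Lemma~\ref{lem:equi}, which used the finiteness of $C_{\sigma_0}(\IC)\cap\torus$; here that hypothesis is dropped, and instead the $S$-integrality hypothesis must do the work. The clean way is: if $L(x)=0$ at every finite place of residue characteristic in $S$, then $S$-integrality forces $|x_i|_v = 1$ at \emph{all} finite places of $F(x)$, so $x_i$ is a unit of absolute value $1$ at every finite place, hence $\height{x_i}$ is a sum of Archimedean contributions only; but then Lemma~\ref{lem:integral} (applied after checking $x$ has infinite order and is an algebraic integer, which the $S=\emptyset$-like conclusion now gives) yields an Archimedean $\sigma$ with $\|L(\sigma(x))\|\ge\epsilon$, and one feeds this into Lemma~\ref{lem:galoisorbit1}(i) instead, getting the even stronger bound $[F(x):F]\gg (n/\log n)^{1/6}$. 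So the two cases — "$L(x)\ne 0$ at some $v$ over $S$" and "$L(x)=0$ at all such $v$" — are handled by parts (ii) and (i) respectively, and in both the exponent $1/9$ is achieved with room to spare in the second case.
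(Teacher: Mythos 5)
Your proposal is correct and follows essentially the same route as the paper: the same dichotomy (all coordinates algebraic integers, handled via Lemma~\ref{lem:integral} and Lemma~\ref{lem:galoisorbit1}(i), versus some $|x_i|_v>1$ at a place over $S$, handled via Lemma~\ref{lem:galoisorbit1}(ii) applied to a projection, with $\inf S$ large and $S$ finite ensuring the constants are uniform). The only nitpick is your claim that $L(x)=0$ at all places over $S$ forces $|x_i|_v=1$ at \emph{every} finite place — $S$-integrality only gives $|x_i|_v\le 1$ outside $S$ — but that is all you need to conclude the $x_i$ are algebraic integers, so the argument stands.
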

\begin{proof}
The proposition follows along the lines of the proof of 
Proposition \ref{prop:galois1}. 
Indeed, if all $x_i$ are algebraic integers, then
Lemmas \ref{lem:galoisorbit1}(i)  and \ref{lem:integral}
do the trick with the better exponent $1/6$ . 
Otherwise there is a number field and  a place $v$ with residue characteristic in $S$
such that $|x_i|_v>1$ for some $i$. We may suppose that
the said characteristic is large enough in order to apply part (ii) of
Lemma \ref{lem:galoisorbit1} to a suitable projection of $C$. 
Inequality (\ref{eq:degreelb2}) concludes the proof. 
\end{proof}

\section{o-minimality}
\label{sec:ominimal}
\providecommand{\IRexpsin}{\IR_{\exp,\sin_|}}

We refer to \cite{D:oMin} for the essentials on o-minimal structures. 
We will work with
the  structure $\IRexpsin = \left<\IR;+,\cdot,\exp,\left.\sin\right|_{[0,2\pi)}\right>$ generated as a
structure over the real field by the real exponential function and restricted
sine. This structure is o-minimal, by a theorem of Wilkie, cf.
Example B \cite{Wilkie:96}; its o-minimality also follows from the well-known
stronger result that $\IRanexp$ is o-minimal.

We will call a subset of $\IR^m$
 definable if it is definable in $\IRexpsin$. We identify $\IC$ with
 $\IR^2$ by identifying a complex number with the pair consisting of
 its real and imaginary parts.

Let $C\subset\IG_m^N$ be an irreducible algebraic curve defined over $\IC$.
The complex exponential function $\exp : \IC^N\rightarrow(\IC^\times)^N$
restricted to the fundamental domain 
$\mathcal F = (\IR + [0,2\pi)i)^N$ is definable. Hence
\begin{equation*}
  X = \{(n,z,k)\in\IR \times\mathcal F \times\IR^N;\,\, 
\exp(z)\in C(\IC),\; nz-2\pi ik\in \mathcal F,\;
\exp(nz-2\pi i k)\in C(\IC) \},
\end{equation*}
is also definable,
as is its projection $Z\subset \IR \times \IR^{N}$
to the first and the last $N$ coordinates. 

We will treat $X$ and $Z$ as definable families parametrized by 
the
parameter $n\in\IR$. As is usual $X_n\subset \mathcal F \times \IR^N$
and $Z_n\subset \IR^N$ denote the respective fibers. 
The latter  is the projection of
$X_n$ to the final $N$ coordinates. 

We will be interested in integral
$n$. We will show that $Z_n$ contains no semi-algebraic curves if
$n\ge 2$.

The main tool in studying the semi-algebraic curves contained in
fibers of $Z$ is Ax's Theorem. We state only a special case. 
Let $\Delta = \{t\in\IC;\,\, |t|<1\}$.

\begin{theorem}[Ax]
  Suppose $\gamma_1,\ldots,\gamma_N : \Delta\rightarrow \IC$ are
  holomorphic functions   for which
$\gamma_1-\gamma_1(0),\ldots,\gamma_N-\gamma_N(0)$ are 
$\IZ$-linearly independent. Then 
  \begin{equation*}
    \trdeg{\IC(\gamma_1,\ldots,\gamma_N,\exp\circ\gamma_1,\ldots,\exp\circ\gamma_N)/\IC}
    \ge N+1.
  \end{equation*}
\end{theorem}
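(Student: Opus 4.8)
The plan is not to reprove this statement but to cite it: it is the one-variable holomorphic case of Ax's theorem, the function-field analogue of Schanuel's conjecture, and is contained in Ax's 1971 paper \emph{On Schanuel's conjectures}. (Here one should read $\gamma_1,\ldots,\gamma_N$ as scalar holomorphic functions on the disc $\Delta$, equivalently as the coordinate functions of a single holomorphic curve $\Delta\to\IC^N$.) For orientation, the proof runs as follows.

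First I would leave the analytic category for differential algebra. Form the differential field $K=\IC(\gamma_1,\ldots,\gamma_N,\exp\circ\gamma_1,\ldots,\exp\circ\gamma_N)$ of meromorphic functions on $\Delta$, with derivation $D=d/dt$; since $\Delta$ is connected, its field of constants is exactly $\IC$. Writing $x_i=\gamma_i$ and $y_i=\exp\circ\gamma_i$, one has $Dy_i=(Dx_i)\,y_i$, so each $y_i$ is a formal exponential of $x_i$. A short computation shows that the hypothesis that $\gamma_1-\gamma_1(0),\ldots,\gamma_N-\gamma_N(0)$ are $\IZ$-linearly independent is equivalent to the statement that no nonzero $(n_1,\ldots,n_N)\in\IZ^N$ has $\sum_i n_ix_i\in\IC$; equivalently, $y_1,\ldots,y_N$ are multiplicatively independent modulo $\IC^\times$.

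Second, I would invoke Ax's purely algebraic theorem on differential fields: if $(K,D)$ is a differential field with constant field $C$, if $y_i\in K^\times$ with $Dy_i/y_i=Dx_i$ for $i=1,\ldots,N$, and if no nontrivial $\IZ$-combination of $x_1,\ldots,x_N$ lies in $C$, then $\trdegS\,C(x_1,\ldots,x_N,y_1,\ldots,y_N)/C\ge N+1$. Applied to the field constructed above, this is precisely the assertion. The excess $+1$ over the naive bound $N$ records the fact that some $Dx_i\neq 0$, which is forced by the independence hypothesis once $N\ge 1$; indeed, that hypothesis forces every $\gamma_i$ to be nonconstant.

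The real content, and hence the main obstacle, is this algebraic core: it is genuinely deep, and in a self-contained treatment it---rather than anything in the reduction---would be the hard part. Ax's own proof analyses the configuration via algebraic groups equipped with a derivation and the behaviour of the associated logarithmic map; alternative proofs have since been given (for instance by Brownawell and Kubota, and by Kirby), and there is a complex-analytic, foliation-theoretic route in which the decisive input is that the foliation $dy_i/y_i=dx_i$ on $\IG_a^N\times\IG_m^N$ admits no unexpected algebraic leaves. For the use made of the theorem in this paper none of this needs to be redone; the only point that requires any care when applying it is the bookkeeping in the reduction above: one must match the $\IZ$-independence of the $\gamma_i-\gamma_i(0)$ with Ax's multiplicative-independence hypothesis, which is routine.
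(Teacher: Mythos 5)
Your proposal is correct and matches the paper's proof, which likewise just observes that the hypothesis forces some $\gamma_i$ to be nonconstant and then cites the one-variable case of Ax's result (Corollary~2 of \emph{On Schanuel's conjectures}). Your extra bookkeeping translating $\IZ$-independence of the $\gamma_i-\gamma_i(0)$ into Ax's hypothesis is accurate but is exactly the reduction the paper leaves implicit.
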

\begin{proof}
By hypothesis, not all $\gamma_i$ are constant. So the claim 
 follows directly from the one variable case of Corollary~2 \cite{Ax:Schanuel}.
\end{proof}

\begin{proposition}
\label{prop:nocurves}
We  assume that $N\ge 3$ and that $C$ is not contained in a proper coset of
  $\IG_m^N$.  If $n\ge 2$ is integral,  then $Z_n$ does not contain a semi-algebraic curve. 
\end{proposition}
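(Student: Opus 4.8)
The plan is to argue by contradiction: suppose $Z_n$ contains a semi-algebraic curve for some integer $n\ge 2$, and derive a contradiction with Ax's Theorem. Since $Z_n$ is the projection of $X_n$ to the last $N$ coordinates, a semi-algebraic curve in $Z_n$ lifts (possibly after passing to a subcurve or a finite cover) to a semi-algebraic curve in $X_n$. First I would take a real-analytic parametrization of a suitable real-dimension-one piece of $X_n$, complexify it, and obtain a holomorphic map $t\mapsto z(t)$ from the disc $\Delta$ into $\mathcal F$ (or rather into $\IC^N$, after translating by a fixed element of $\mathcal F$) such that $\exp(z(t))\in C(\IC)$ and $\exp(nz(t)-2\pi i k)\in C(\IC)$ for the appropriate constant integer $k=k(t)$; note that on a connected piece $k$ is locally constant, hence constant, and can be absorbed into $z$ since $\exp$ kills $2\pi i\IZ^N$. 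The key point is that the image of $t\mapsto\exp(z(t))$ is contained in a fixed algebraic curve $C$, hence has transcendence degree at most $1$ over $\IC$, and similarly for $t\mapsto\exp(nz(t))$; so
\[
\trdegS\,\IC\big(z,\exp\circ z,\exp\circ(nz)\big)/\IC \le \trdegS\,\IC(z)/\IC + 1 + 1 \le 3,
\]
actually $\le 1+1+1=3$ if $z$ is non-constant, but in fact $\exp\circ(nz)=(\exp\circ z)^n$ is algebraic over $\IC(\exp\circ z)$, so the bound is $\le \trdegS\,\IC(z,\exp\circ z)/\IC$.

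The heart of the matter is to apply Ax's Theorem to the $N$ coordinate functions $\gamma_i = z_i$ (the components of $z$). Ax gives $\trdegS\,\IC(z_1,\dots,z_N,\exp z_1,\dots,\exp z_N)/\IC \ge N+1$ provided $z_1-z_1(0),\dots,z_N-z_N(0)$ are $\IZ$-linearly independent. On the other hand $\exp\circ z$ lands on the curve $C$, so $\trdegS\,\IC(\exp z_1,\dots,\exp z_N)/\IC\le 1$, and $z$ parametrizes a semi-algebraic — hence, by o-minimal/analytic arguments, a one-dimensional complex-analytic — set, giving $\trdegS\,\IC(z_1,\dots,z_N)/\IC\le 1$. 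Combining, $\trdegS\,\IC(z,\exp\circ z)/\IC\le 2 < N+1$ since $N\ge 3$. This contradicts Ax unless the $\IZ$-linear independence hypothesis fails; so there must be a nonzero $(a_1,\dots,a_N)\in\IZ^N$ and a constant $b\in\IC$ with $\sum_i a_i z_i(t) = b$ for all $t$. Exponentiating, $\prod_i \exp(z_i(t))^{a_i} = \exp(b)$ is constant, which says the curve $C$ — which contains the nonconstant image of $t\mapsto\exp(z(t))$ — lies in the coset $\{\,u\in\IG_m^N : u^{(a_1,\dots,a_N)} = \exp(b)\,\}$, contradicting the hypothesis that $C$ is not contained in a proper coset.

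The main obstacle I expect is the passage from "$Z_n$ contains a semi-algebraic curve" to "there is a nonconstant holomorphic $z:\Delta\to\IC^N$ with $\exp\circ z$ nonconstant and $z$ parametrizing the lifted curve." Several things need care here: (a) a semi-algebraic curve in $Z_n$ need not lift to one in $X_n$ on the nose, but $X_n\to Z_n$ has finite fibers (the fiber over a point of $Z_n$ records the choice of representative $z\in\mathcal F$ of a point of $C$ together with the integer $k$, and for fixed image in $\IG_m^N$ there are only finitely many such $z\in\mathcal F$ and $k$ is then determined up to finitely many choices), so one can pass to a connected semi-algebraic piece upstairs of the same dimension; (b) one must ensure the parametrizing map is nonconstant after projecting by $\exp$ — but if $\exp\circ z$ were constant then the curve in $Z_n$ would be a point, not a curve; (c) replacing a semi-algebraic real curve by a complex-analytic disc requires choosing a smooth point and a local real-analytic parametrization, then complexifying the coordinate — standard but worth stating. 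Also one should handle the constant $k$: on the connected analytic piece it is genuinely constant, and $nz(t)-2\pi i k\in\mathcal F$ together with $\exp(nz(t)-2\pi ik)\in C$ just says $(\exp\circ z)^n$ maps into $C$. Once these reductions are in place, the transcendence-degree bookkeeping plus Ax is immediate, and the $\IZ$-linear-dependence alternative feeds directly into the "not contained in a proper coset" hypothesis.
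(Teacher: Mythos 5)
There is a genuine gap, and it is at the very start: you have misread the definition of $X_n$. In the paper's definition the last $N$ coordinates $k$ range over $\IR^N$, not $\IZ^N$, and $Z_n$ is precisely the projection of $X_n$ onto these $k$-coordinates. A semi-algebraic curve in $Z_n$ is therefore a curve along which the \emph{real} vector $k$ varies; it is not ``locally constant, hence constant''. (If $k$ were constant on the lifted curve, its projection to $Z_n$ would be a single point, not a curve, so your setup contradicts the very hypothesis you are trying to refute; and if $Z_n$ were contained in $\IZ^N$ it would be discrete, the proposition vacuous, and the Pila--Wilkie step pointless.) As a consequence you apply Ax's Theorem to the wrong function. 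The paper applies it to the semi-algebraic parametrization $\gamma$ of the curve in the $k$-coordinates, using that $\exp(-2\pi i\gamma)=\varphi\bigl(\exp\theta,\exp(n\theta-2\pi i\gamma)\bigr)$ lands in the Zariski closure of $\varphi(C\times C)$, where $\varphi(x,y)=x^{-n}y$, a surface; the inequality $N+1>1+2$ (this is where $N\ge 3$ enters) then forces the components of $\gamma$ to be $\IZ$-linearly dependent modulo constants. The remaining, substantial part of the paper's proof --- the case distinction on $\dim Y$ for $Y$ the Zariski closure of the image of $\exp(-2\pi i\gamma)$, the construction of a homomorphism $\phi:\IG_m^N\to\IG_m^{N-1}$ constant on $Y$, and the appeal to Lemma~\ref{lem:curvepower} to rule out $C'=x_0[n](C')$ --- has no counterpart in your argument.

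A second, independent problem: you assert $\trdegS\,\IC(z_1,\dots,z_N)/\IC\le 1$ ``since $z$ parametrizes a semi-algebraic set''. The witness $z=\theta$ over a point of $Z_n$ is produced by Definable Choice and is only \emph{definable} (its graph involves $\exp$), not semi-algebraic, so no such transcendence bound is available a priori. The paper is careful never to use a transcendence bound on $\theta$; the only input of transcendence degree one is the semi-algebraicity of the curve in $Z_n$ itself, that is, of $\gamma$, and the function $\theta$ is controlled instead by the algebro-geometric argument with $\varphi$, $Y$, $\phi$ and $C'$.
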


\begin{proof}
  Let us assume the contrary and suppose that $Z_n$ contains a
  semi-algebraic curve $S$. 
So there is a semi-algebraic and non-constant map
 $\gamma:(-1,1)\rightarrow S$. After reparametrizing we may suppose
that the coordinates $\gamma_1,\ldots,\gamma_N$ of $\gamma$ extend to
holomorphic functions defined on $\Delta$
with values in $\IC$
and with 
$\trdeg{\IC(\gamma_1,\ldots,\gamma_N)/\IC}=1$.

 ``Definable Choice''
provides us with 
a definable map $\theta:(-1,1)\rightarrow \mathcal F$ such that
\begin{equation*}
  (\theta(t),\gamma(t)) \in X_n\quad\text{for all}\quad t\in (-1,1).
\end{equation*}

Definable sets in $\IRexpsin$ admit a decomposition into analytic cells.
So the function $\theta$ is real analytic in a neighborhood
of some point of $(-1,1)$. 
At this point the real- and imaginary parts of all coordinates of
$\theta$ have 
a Taylor expansion.
Each of these $2N$ functions admits a holomorphic continuation in  a
neighborhood of said point. 
By taking appropriate linear combinations after possibly reparametrizing, we find that
 $\theta$ continues to 
 a holomorphic
function
$\Delta\rightarrow \IC^N$. The image of
$\exp\circ\theta:\Delta\rightarrow\IG_m^N(\IC)$ is still contained in
$C(\IC)$ by analyticity.

(Although we choose not to present it this way, readers versed in model theory
may find it helpful to consider $\gamma(t)$ and $\theta(t)$ as points in an
elementary extension ${^*}\IR = \operatorname{dcl}(\IR,t)$, with $\gamma(t)$
generic on $S$ over $\IR$ and $\theta(t)$ a witness point in $X_n$. In this
view, we can directly apply Theorem 3 \cite{Ax:Schanuel} via a technique due
to Wilkie, considering ${^*}\IC={^*}\IR+i{^*}\IR$ as a differential field with
$\delta(f(t)):=f'(t)$ for $f$ a function defined over $\IR$.)

We consider the morphism
$\varphi:\IG_m^N\times\IG_m^N\rightarrow\IG_m^N$ defined by
$\varphi(x,y) = x^{-n}y$. The Zariski closure of $\varphi(C\times C)$ has dimension
at most $2$.  The Zariski closure $Y$ of the image of
 $t\mapsto \exp(-2\pi i \gamma(t))$ is
contained in $\varphi(C\times C)$ and hence has
dimension at most $2$. We remark that $Y$ is irreducible.

Since $N+1 > 1 + 2$, Ax's Theorem implies that the
co-ordinates of $\gamma$ are $\IZ$-linearly dependent modulo constants, and so
the variety $Y$ is contained in a proper coset.

If $\dim Y = 2$ then 
 $\varphi(C\times C)$ is contained in a proper coset.
The same then holds true for 
 $C\times C$ and also
 $C$. This contradicts our hypothesis
and so $\dim Y\le 1$. 

So if  $1\le i,j\le N$ then 
\begin{equation*}
  \trdeg{\IC
  (\gamma_i,\gamma_j,\exp\circ\gamma_i,\exp\circ\gamma_j) /\,\IC} \le
  \trdeg{\IC(\gamma_1,\ldots,\gamma_N,\exp\circ\gamma_1,\ldots,\exp\circ\gamma_N) /\IC}  \le 2
\end{equation*}
and so $\gamma_i$ and $\gamma_j$ are $\IZ$-linearly dependent 
 modulo constants by Ax's Theorem. So $Y$ is itself a coset, 
 and there is a
surjective homomorphism of algebraic groups $\phi
:\IG_m^N\rightarrow \IG_m^{N-1}$
that is constant on $Y$. 

It is convenient to write 
$C'$ for the Zariski closure of $\phi(C)$.
For $t\in \Delta$ we have $\phi(\exp(n\theta(t)-2\pi i \gamma(t)))\in 
C'(\IC)$. Moreover,
\begin{equation*}
  \phi(\exp(n\theta(t)-2\pi i \gamma(t)))
 = x_0 \phi(\exp\theta(t))^n \in C'(\IC)\cap x_0 [n](C'(\IC))
\end{equation*}
where $x_0  = \phi(\exp(-2\pi i \gamma(t)))$ is independent of $t$. 
  
If $t\mapsto \phi(\exp\theta(t))^n$ is non-constant on $\Delta$, then 
it takes infinitely many values. In this case $C' \cap x_0
[n](C')$ is infinite.
By comparing dimension we find
$C' = x_0 [n](C')$
since both sides are irreducible curves. 
%
By Lemma \ref{lem:curvepower} the curve $C'$
is a coset. Therefore, $C$ is contained in a proper coset of
$\IG_m^N$ and we have a contradiction.

So $t\mapsto \phi(\exp \theta(t))^n$ 
is  constant. If $t\mapsto \exp \theta(t)$ is non-constant,  its
image is in a translate of the 1-dimensional kernel of $\phi$. 
This too is impossible as $\exp\theta(t)\in C(\IC)$. Hence
$t\mapsto \exp\theta(t)$  is constant and the same holds for  $t\mapsto \theta(t)$.
As we deduced from Ax's Theorem, the components of $\gamma$ are $\IZ$-linearly
dependent modulo constants. 
But $\gamma$ is non-constant and holomorphic, so 
 $\exp(n\theta -2\pi i \gamma)$ takes infinitely many 
values in $C(\IC)$. 
It follows that  $C$ is contained in a proper coset
and this contradicts the hypothesis. 
\end{proof}

\section{Proof of the Theorems}

We will prove Theorems \ref{thm:main} and \ref{thm:sec} simultaneously.
The only difference in treating  these two cases is if we will refer
to Proposition  \ref{prop:galois1} or 
 \ref{prop:galois2}.

Suppose that $C$ is as in the hypothesis.
We first reduce to the case where $C$ is not contained in a proper
coset. Otherwise there is a 
 coset $yH$ with $C\subset yH$ where $H\subsetneq\IG_m^n$ is a connected algebraic
subgroup. If $x\in C(\IQbar)$ and $x^n\in C(\IQbar)$ for some $n\ge
2$, then $x\in yH\cap y^nH$ and so $yH = y^nH$ which yields
$y^{n-1}\in H$. Therefore, $yH$ is a proper torsion coset
containing $C$, contradicting the hypothesis. 

The Manin-Mumford Conjecture for curves  in $\IG_m^N$,
Lang \cite{LangDivision} presents proofs due to Ihara, Serre, and Tate,
 states that there are only
finitely many points on $C$ of finite order. 
Hence we must prove that there are only finitely many non-torsion
$x\in C(\IQbar)$ with $x^n\in C(\IQbar)$ for some $n\ge 2$. 
For a fixed $n$ there are only finitely many such points
by Lemma \ref{lem:curvepower}. 
We will prove the theorem by  deriving
 a contradiction for all sufficiently large $n$. 

By Proposition \ref{prop:galois1}
 or \ref{prop:galois2}, there are at least $c_1 n^{1/9}$
conjugates $x_1,\ldots,x_M$ of $x$ over $F$; here and
below $c_1,c_2,c_3$ are positive constants that do not depend on
$n$ or $x$.

 These conjugates and their $n$-th
powers are points on $C(\IQbar)$. 
For each $x_j$ there is $z_j \in \mathcal F$ and $k_j \in \IZ^N$ with 
$\exp(z_j) = x_j$ and $n z_j - 2\pi i k_j \in \mathcal F$. 
Then 
\begin{equation}
\label{eq:xjkj}
  (z_j,k_j) \in X_n \quad\text{and}\quad k_j \in Z_n \cap\IZ^N \quad\text{for all}\quad
1\le j\le M
\end{equation}
where $X$ and $Z$ are as in Section \ref{sec:ominimal}.

The fiber $Z_n$ does not contain a real semi-algebraic curve by
Proposition \ref{prop:nocurves}. So
we can use the Pila-Wilkie Theorem \cite{PilaWilkie} to bound from above the number of
integral points of bounded height on $Z_n$. 
As this result is uniform
in families we find a constant $c_2$ independent of $n$ such that
\begin{equation}
\label{eq:PW}
  \# \{ k \in Z_n \cap \IZ^N;\,\, \Height{k} \le n \}
\le c_2 n^{1/10};
\end{equation}
 we recall that the height $\Height{k}$ was defined
in Section \ref{sec:notation}.
Of course, the exponent $1/10$ can be replaced by any positive
constant at the cost of increasing $c_2$. 

 We proceed to show that $k_j$ are among the elements being counted in 
  (\ref{eq:PW}). Indeed,
the imaginary part of the components of $z_j$ and $nz_j - 2\pi i k_j$ lie in
$[0,2\pi)$. So the components of $k_j$ are in $[0,n)$. Because
    $\Height{k_j}$ is the largest modulus of a component
    we find $\Height{k_j}\le n$. 

We  already know that there at least $c_1 n^{1/9}$ distinct
$z_j$. But we require a lower bound for the number of $k_j$. 
By a  basic uniformity property of o-minimal structures the number
of connected components of the fiber
\begin{equation*}
  X_{n,k_j} = \{ z\in \mathcal F;\,\, (z,k_j)\in X_n\}
\end{equation*}
is bounded from above by a constant $c_3$ that does not dependent on
$n$ or $k_j$.

We claim that each $X_{n,k_j}$ is finite. If this is not the case  there
would be infinitely many $x\in C(\IC)$ with $x^n \in C(\IC)$.
We have already seen that this is impossible by Lemma \ref{lem:curvepower}.

So $\# X_{n,k_j}\le c_3$ and the number of distinct $k_j$ is at least
$\frac{c_1}{c_3}n^{1/9}$. This lower bound contradicts (\ref{eq:PW})
 for $n$ sufficiently large. Thus $n$ is bounded from above and therefore we
 deduced the desired finiteness result. 
\qed

\bibliographystyle{amsplain}

\def\cprime{$'$}
\providecommand{\bysame}{\leavevmode\hbox to3em{\hrulefill}\thinspace}
\providecommand{\MR}{\relax\ifhmode\unskip\space\fi MR }
\providecommand{\MRhref}[2]{%
  \href{http://www.ams.org/mathscinet-getitem?mr=#1}{#2}
}
\providecommand{\href}[2]{#2}

\vfill
\address{
\noindent Martin Bays,  McMaster University, Ontario, Canada,
{\tt mbays@sdf.org}

\vspace{0.5cm}

\noindent
Philipp Habegger,
TU Darmstadt,
Schlossgartenstrasse 7,
Darmstadt,
64289 Germany,
{\tt habegger@mathematik.tu-darmstadt.de}
}
\bigskip
\hrule
\medskip

\end{document}